\newcommand{\PP}{\mathbb{P}}
\newcommand{\OO}{\mathcal{O}}
\newcommand{\KK}{\mathbb{K}}
\newcommand{\mm}{\mathfrak{m}}
\newcommand{\Soc}{\textnormal{Soc}}
\newtheorem{theorem}{Theorem}[section]
\newtheorem{lemma}[theorem]{Lemma}
\theoremstyle{definition}
\newtheorem{definition}[theorem]{Definition}
\newtheorem{example}[theorem]{Example}
\theoremstyle{remark}
\theoremstyle{proposition}
\newtheorem{proposition}[theorem]{Proposition}
\theoremstyle{conjecture*}
\newtheorem*{conjecture*}{Conjecture}
\numberwithin{equation}{section}
\begin{document}

\title{Gorenstein algebras and uniqueness of additive actions}


\author{Ivan Beldiev}
\address{HSE University, Faculty of Computer Science, Pokrovsky Boulevard 11, Moscow, 109028 Russia}
\email{ivbeldiev@gmail.com}
\thanks{Supported by the Russian Science Foundation grant 23-21-00472}

\subjclass[2010]{Primary 14L30, 14J70; Secondary 13E10.}

\keywords{Algebraic variety, algebraic group, additive action, local algebra, projective space, projective hypersurface.}

\date{}

\dedicatory{}

\begin{abstract}
We study induced additive actions on projective hypersurfaces, i.e. regular actions of the algebraic group $\mathbb G_a^m$ with an open orbit that can be extended to a regular action on the ambient projective space. We prove that if a projective hypersurface admits an induced additive action, then it is unique if and only if the hypersurface is non-degenerate. We also show that for any $n\geq 2$, there exists a non-degenerate hypersurface in $\PP^n$ of each degree~$d$ from $2$ to $n$.
\end{abstract}

\maketitle

\section*{Introduction}
Let $\KK$ be an algebraically closed field of characteristic zero. From now on, by an algebraic variety we mean an algebraic variety over $\KK$. Denote by $\mathbb G_a^m$ the algebraic group $(\KK, +)^m$, where $m$ is a positive integer.

An \emph{additive action} on an algebraic variety $X$ is an effective regular action of the group $\mathbb G_a^m$ on $X$ with an open orbit. If $X\subseteq \PP^n$ is a projective hypersurface, then we can consider so-called \emph{induced additive actions} on $X$, i.e. additive actions of $\mathbb G_a^m$ on $X$ that can be extended to a regular action of $\mathbb G_a^m$ on the ambient projective space $\PP^n$. All additive actions we consider are induced, so we will sometimes omit the word "induced" speaking about them.

Not all additive actions on projective hypersurfaces are induced (see \cite[Example 2.2]{AZ}). However, the property of being induced is not as restrictive as it might seem.
For example, suppose that $X\subseteq \PP^{n}$ is a linearly normal subvariety, i.e. $X$ is not contained in any hyperplane and it is not a linear projection of a subvariety from a bigger
projective space. Equivalently, this means that the map $H^0(\PP^n, \OO(1)) \to H^0(X, \OO(1))$ is surjective. In this case, if $X$ admits an additive action, then this action is induced (for a proof, see \cite[Section~2]{AP}).

The development of the theory of additive actions began with the work \cite{HT} by Hassett and Tschinkel. They showed that additive actions on $\PP^n$ are in natural bijection with local finite-dimensional commutative associative unital algebras of dimension~$n + 1$. According to a generalized version of the Hassett-Tschinkel correspondence (see \cite[Section 1.5]{AZ}), there exists, up to natural equivalences, a bijection between the following objects:

\begin{enumerate}
    \item induced additive actions on projective hypersurfaces in $\mathbb P^n$ not contained in any hyperplane;
    \item pairs $(A, U)$, where $A$ is a local commutative associative unital algebra over $\KK$ of dimension $n$ with the maximal ideal $\mathfrak m$ and $U\subseteq \mathfrak m$ is a subspace of dimension $n-2$ generating the algebra $A$. Such pairs $(A,U)$ are called \emph{$H$-pairs}.
\end{enumerate}

In \cite{Sh}, it is shown that there is a unique, up to isomorphism, induced additive action of~$\mathbb G_a^n$ on the smooth quadric $Q_n\subseteq\PP^{n+1}$. In \cite{BGT}, the authors obtain a generalization of this result for regular actions of arbitrary commutative linear algebraic groups on $Q_n \subseteq \PP^{n+1}$. They show that, besides the unique additive action of $\mathbb G_a^n$, there are only the following three cases: $\mathbb G_m$-action on $Q_1$, $\mathbb G_a\times \mathbb G_m$-action on $Q_2$, and $\mathbb G_m^2$-action on $Q_2$ (here $\mathbb G_m$ is the multiplicative group~$(\KK, \times))$.

For singular quadrics, the situation becomes more complicated. In \cite[Chapter 4]{ASh} it is shown that there exists an infinite family of pairwise non-equivalent induced additive actions on quadrics of corank $1$ in $\PP^n$ for $n\geq 5$. In \cite{AP}, the authors classify the induced additive actions of $\mathbb G_a^n$ on quadrics of corank $1$ in~$\PP^{n+1}$. In \cite{YL}, a classification of additive actions on quadrics of corank $2$ such that their singularities are not fixed by the $\mathbb G_a^n$-action is obtained. In \cite{Shaf}, all projective toric hypersurfaces admitting an additive action are found. Namely, it is proved that a toric hypersurface admitting an additive action is isomorphic either to~$\PP^n$ or to a quadric of rank~$3$ or~$4$. Also, a complete classification of additive actions on quadrics of small dimensions is given in this work.

The case of non-degenerate hypersurfaces is of particular interest. A hypersurface $X\subseteq~\PP^n$ given by a homogeneous polynomial $f\in \KK[z_0, z_1, \ldots, z_n]$ is called \emph{non-degenerate} if there is no linear transform of variables reducing the number of variables in $f$. Equivalently, this means that $X$ is not a projective cone over a hypersurface $X_0\subseteq \PP^k$ in a projective subspace $\PP^k \subseteq \PP^n$ for some $k < n$. In particular, a quadric is non-degenerate if and only if it is smooth.

In \cite{AZ}, the authors establish a relation between additive actions on non-degenerate hypersurfaces and Gorenstein local finite-dimensional algebras, i.e. local finite-dimensional algebras $A$ with the maximal ideal $\mathfrak m$ such that the ideal $\Soc A = \{a\in A\mid a \mm = 0\}$ is one-dimensional. They show that induced additive actions on non-degenerate hypersurfaces in $\PP^n$ are in one-to-one correspondence with $H$-pairs $(A,U)$ such that $A$ is a Gorenstein local finite-dimensional algebra and $U\subseteq \mathfrak m$ is a complementary hyperplane to $\Soc A$. Moreover, it is shown in the same article that there exists at most one induced additive action on any non-degenerate hypersurface $X$ up to equivalence. There is the following natural conjecture (see \cite[Conjecture 5.19]{AZ}).

\begin{conjecture*}
    Let $X \subseteq \mathbb P^{n}$ be a degenerate hypersurface admitting an induced additive action. Then there are at least two non-equivalent induced additive actions on $X$.
\end{conjecture*}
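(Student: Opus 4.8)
The plan is to argue not with the defining polynomial of $X$ directly but with additive actions as commutative unipotent subgroups $G\le\mathrm{PGL}_{n+1}$ preserving $X$ and having an open orbit, taken up to conjugacy (equivalently, via the generalized Hassett--Tschinkel correspondence, to produce a second $H$-pair; but the group picture is more transparent here). Since $X$ is degenerate, I would first choose homogeneous coordinates so that $X=\{f=0\}$ with $f\in\KK[z_0,\dots,z_{n-1}]$, so that $X$ is the cone over $Y:=\{f=0\}\subseteq\PP^{n-1}$ with vertex $p=[0:\dots:0:1]$; moreover, since the vertex locus of $X$ is $G$-stable and $G$ is unipotent, I may assume $p$ is fixed by the given action $G$. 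Lifting $G$ to $\mathrm{GL}_{n+1}$, every $g\in G$ then has the block form with diagonal blocks $\rho(g)\in\mathrm{GL}_n$ and $1$, zero upper-right block, and lower-left block $\beta(g)$, where $\rho\colon G\to\mathrm{GL}_n$ is a representation on $W:=\langle z_0,\dots,z_{n-1}\rangle$ that preserves $Y$ with an open orbit — so $\rho(G)\cong\mathbb G_a^{n-2}$ and $K:=\ker\rho\cong\mathbb G_a$ — and $\beta\colon G\to W^*$ is a $1$-cocycle for $\rho$.

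The key step is to deform $\beta$ by a cocycle ``supported on the vertex direction''. For a $1$-cocycle $\gamma\in Z^1(G,W^*)$ satisfying the symmetry relation $\gamma(g)(\rho(h)-I)=\gamma(h)(\rho(g)-I)$ for all $g,h$, let $G_\gamma$ be the subgroup obtained from $G$ by replacing the lower-left entry $\beta(g)$ with $\beta(g)+\gamma(g)$. Then $G_\gamma$ is again commutative and unipotent, and it preserves $X$ because it acts on $z_0,\dots,z_{n-1}$ exactly as $G$ does. I would check that $G_\gamma$ still has an open orbit on $X$ provided $\gamma$ avoids one affine-linear condition (namely that $k\mapsto(\beta+\gamma)(k)\,x_0'$ be a nonzero function on $K$, where $x_0'$ is the non-vertex part of a point in the open $G$-orbit). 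Conjugating $G_\gamma$ by the unipotent element with trivial action on $z_0,\dots,z_{n-1}$ and lower-left entry $\delta$ — which fixes $X$ — changes $\gamma$ to the cohomologous cocycle $\gamma+\bigl(g\mapsto\delta(\rho(g)-I)\bigr)$. Hence the class $[\gamma]\in H^1(G,W^*)$ is an invariant which, at least as long as no automorphism of $X$ moves the vertex point $p$, distinguishes $G_\gamma$ from the original action $G=G_0$.

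To produce a nonzero admissible class I would take any nonzero $v_0\in(W^*)^G$ (this space is nonzero since $G$ is unipotent; it consists of the $\rho(G)$-invariant linear forms on $W$, and its dimension equals the dimension of the socle of the local algebra attached to $Y$) together with a character $\chi\in\mathrm{Hom}(G,\KK)$ that is nontrivial on $K$ (possible since $K\cong\mathbb G_a$ is nontrivial). Then $\gamma:=\chi\,v_0$ is a $1$-cocycle; it satisfies the symmetry relation automatically because $v_0$ is $\rho(G)$-invariant; and it is not a coboundary, since every coboundary vanishes on $K$ whereas $\gamma|_K=\chi|_K\,v_0\ne0$. Rescaling $\chi$ to meet the open-orbit condition as well (one forbidden scalar, distinct from $0$, so a valid choice remains) yields an induced additive action $G_\gamma$ on $X$ with $[\gamma]\ne0$, hence inequivalent to $G_0$ whenever $p$ is the only vertex of $X$, i.e. whenever $Y$ is non-degenerate. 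For general $X$ I would run this construction at each vertex direction and pass to the non-cone base $X_0$ of $X$ (over which $X$ is an iterated cone), which carries a unique additive action, studying the ways of extending it to the cone. The hard part will be exactly this last step: when the vertex $\PP(\mathrm{Vert}\, f)$ is positive-dimensional, the stabilizer of $X$ in $\mathrm{PGL}_{n+1}$ can mix the vertex directions, and I must show it still cannot send the class $[\gamma]$ to $0$ — which forces a careful analysis of the unique additive action on $X_0$ and of how the extra unipotent directions can be attached to it.
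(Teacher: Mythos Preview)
There is a genuine gap. Your claim that the class $[\gamma]\in H^1(G,W^*)$ distinguishes $G_\gamma$ from $G_0$ ignores that the definition of equivalence allows an arbitrary automorphism $\varphi\in\mathrm{Aut}(\mathbb G_a^{n-1})$ to reparametrize the group. Concretely, take $Y$ non-degenerate and let the given action be the one corresponding to the ``trivial extension'' $H$-pair $(A_1,U_1)=(A_0\oplus\KK e,\;U_0\oplus\KK e)$ with $e\,\mm_1=0$. Identifying $W$ with $A_0$ and $G$ with $U_0\oplus\KK$ via $(u,t)\mapsto\exp(u+te)$, one finds $\rho(u,t)=\text{mult.\ by }\exp(u)$ and $\beta(u,t)=t\,v_0$, where $v_0\in W^*$ is the ``constant term'' functional. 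A direct check shows $(W^*)^{\rho(G)}=\KK v_0$ is one-dimensional (it is $(A_0/\mm_0)^*$, not $\Soc A_0$ as your parenthetical asserts). Hence every cocycle of your form is $\gamma(u,t)=(\ell(u)+ct)v_0$, and then
\[
G_\gamma=\Bigl\{\begin{pmatrix}\exp(u)&0\\ \bigl((1+c)t+\ell(u)\bigr)v_0&1\end{pmatrix}:u\in U_0,\ t\in\KK\Bigr\}
\]
is literally the \emph{same} subgroup of $\mathrm{GL}_{n+1}$ as $G_0$ whenever $c\neq -1$ (your open-orbit condition): for each $u$ the lower-left entry still sweeps out all of $\KK v_0$. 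So even though $[\gamma]\neq 0$ in $H^1(G,W^*)$, the deformation produces no new action; the nonzero class is absorbed by the automorphism $(u,t)\mapsto(u,(1+c)t+\ell(u))$ of $G$. Thus your invariant is not well-defined on equivalence classes, and the argument fails already in the simplest case where $Y$ is non-degenerate---before you even reach what you flag as the ``hard part''.

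By contrast, the paper sidesteps any analysis of $\mathrm{Aut}(X)$ or of reparametrizations entirely. It works in the $H$-pair picture: starting from the reduced pair $(A_0,U_0)$ it builds two extensions $(A_1,U_1)$ and $(A_2,U_2)$, the first by adjoining a new socle generator, the second by enlarging $A_0$ inside the same polynomial ring via a careful shrinking of its defining ideal (Lemma~2.1). The key point is that the underlying algebras are not isomorphic, since $\dim\mm_1/\mm_1^2=\dim\mm_0/\mm_0^2+1$ while $\dim\mm_2/\mm_2^2=\dim\mm_0/\mm_0^2$. Because the embedding dimension is an isomorphism invariant of the algebra, inequivalence of the two $H$-pairs---hence of the two induced actions---is immediate. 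If you want to rescue a cohomological approach, you would need an invariant that survives both $\mathrm{Aut}(G)$ and the full stabilizer of $X$; the paper's choice of $\dim\mm/\mm^2$ is exactly such an invariant, phrased algebraically.
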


In Theorem 2.2, we prove this conjecture. In order to do this, we apply the reduction procedure described in Proposition 1.9 below to the $H$-pair $(A, U)$ corresponding to a given degenerate hypersurface $X\subseteq \PP^n$ admitting an additive action. This gives us an $H$-pair~$(A_0, U_0)$ corresponding to a non-degenerate hypersurface $X_0\subseteq \PP^k$ such that $X$ is a projective cone over~$X_0$. After this, we construct two non-equivalent $H$-pairs $(A_1, U_1)$ and $(A_2, U_2)$ such that $\dim A_1 = \dim A_2 = \dim A_0 + 1$ that go to $(A_0, U_0)$ under reduction. 
\vspace{0.3cm}

In \cite[Corollary 2.16]{AZ}, the authors also show that the degree of a hypersurface $X\subseteq \PP^n$ admitting an induced additive action is at most $n$. It is noted that for $2\leq n\leq 5$, there are hypersurfaces in $\PP^n$ admitting an induced additive action of each degree from $2$ to $n$. In Proposition 3.1, we prove that this is true for any $n\geq 2$. The corresponding hypersurfaces are described in terms of $H$-pairs.

\section*{Acknowledgements}
The author is grateful to Ivan Arzhantsev and Yulia Zaitseva for useful discussions.

\section{Preliminaries}
Let us begin with some basic results on local finite-dimensional algebras. From now on, when we say "algebra", we mean a finite-dimensional commutative associative unital algebra over the field $\KK$. Recall that an algebra is called \emph{local} if it contains a unique maximal ideal~$\mathfrak m$.

\begin{lemma} \cite[Lemma 1.2]{AZ}
    A finite-dimensional algebra $A$ is local if and only if $A$ is the direct sum of its subspaces $\mathbb K \oplus \mathfrak m$, where $\mathfrak m$ is the ideal consisting of all nilpotent elements of $A$.
\end{lemma}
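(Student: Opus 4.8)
The plan is to prove the two implications separately, using finite-dimensionality crucially in one direction and the finite geometric series for nilpotent elements in the other.

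For the \emph{if} direction, suppose $A = \KK \oplus \mm$, where $\mm$ is the set of all nilpotent elements of $A$ and is an ideal. First I would check that $\mm$ is a maximal ideal: it is proper because $1 \notin \mm$, and $A/\mm$ is one-dimensional over $\KK$, hence a field. For uniqueness I would show that every element outside $\mm$ is invertible: writing $a = \lambda \cdot 1 + n$ with $\lambda \in \KK$, $\lambda \neq 0$, and $n \in \mm$, the element $1 + \lambda^{-1} n$ has inverse $\sum_{i \geq 0} (-\lambda^{-1} n)^i$, a finite sum since $n$ is nilpotent, so $a$ is a unit. Consequently every proper ideal consists of non-units and therefore lies in $\mm$, so $\mm$ is the unique maximal ideal and $A$ is local.

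For the \emph{only if} direction, suppose $A$ is local with maximal ideal $\mm$. Every nilpotent element lies in $\mm$, since it is not a unit and hence belongs to some maximal ideal, which must be $\mm$. The key step, and the main obstacle, is the converse: every element of $\mm$ is nilpotent, equivalently $\mm$ is a nilpotent ideal. Here I would invoke finite-dimensionality: the descending chain of subspaces $\mm \supseteq \mm^2 \supseteq \mm^3 \supseteq \cdots$ stabilizes, say $\mm^k = \mm^{k+1}$, and then Nakayama's lemma (applicable since $\mm$ is the Jacobson radical of $A$ and $\mm^k$ is finitely generated) forces $\mm^k = 0$. If one prefers to avoid citing Nakayama, the same conclusion follows by a direct minimal-counterexample argument: among the ideals $J$ with $J\,\mm^k \neq 0$ pick one minimal, reduce to the case $J = (x)$ principal, deduce $x = xy$ for some $y \in \mm$, and conclude $x = 0$ from the invertibility of $1 - y$, a contradiction. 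Either way $\mm$ consists precisely of the nilpotent elements of $A$ and is an ideal.

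Finally, to obtain the vector space decomposition $A = \KK \oplus \mm$, I would observe that $A/\mm$ is a finite field extension of the algebraically closed field $\KK$, hence $A/\mm = \KK$; therefore the composite $\KK \hookrightarrow A \twoheadrightarrow A/\mm$ is an isomorphism, which yields $A = \KK\cdot 1 \oplus \mm$ as $\KK$-vector spaces. The only genuinely non-formal input is the nilpotence of $\mm$ in a finite-dimensional local algebra; everything else is routine bookkeeping.
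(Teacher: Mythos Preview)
Your argument is correct. However, the paper does not actually supply its own proof of this lemma: it merely states the result and cites \cite[Lemma~1.2]{AZ}. So there is no in-paper proof to compare against. Your proof is a standard and complete one; the only point worth flagging is that you tacitly use that $A$ is commutative when asserting that the set of nilpotents forms an ideal and when applying Nakayama, which is fine here since the paper's standing convention is that all algebras are commutative, associative, and unital over an algebraically closed field $\KK$.
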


\begin{definition}
    The \emph{socle} of a local algebra $A$ with the maximal ideal $\mathfrak m$ is the ideal $$\Soc A = \{a\in A\mid a \mm = 0\}.$$ A local finte-dimensional algebra $A$ is called \emph{Gorenstein} if $\dim \Soc A = 1$.
\end{definition}

Consider the greatest positive integer $d$ such that $\mathfrak m^d \ne 0$. It is clear that $\mathfrak m^d\subseteq \Soc A$, but this inclusion can be strict. It follows that $A$ is Gorenstein if and only if $\dim \mathfrak m^d = 1$ and~$\Soc A = \mathfrak m^d$.

We already introduced the notion of an induced additive action on a hypersurface (see Introduction). Let us now give a formal definition of the equivalence of two actions.

\begin{definition}
    Two induced additive actions $\alpha_i\colon \mathbb G_a^m \times X_i\to X_i$, $X_i\subseteq \PP^n$, $i = 1, 2$, are called \emph{equivalent} if there exists an automorphism of algebraic groups $\varphi\colon \mathbb G_a^m \to \mathbb G_a^m$ and an automorphism $\psi \colon \PP^n \to \PP^n$ such that $\psi(X_1) = X_2$ and $$\psi~\circ~\alpha_1~=~\alpha_2~\circ~(\varphi\times~\psi).$$
\end{definition}

In the following definition, we consider pairs $(A, U)$, where $A$ is a local finite-dimensional algebra with the maximal ideal $\mm$ and $U\subseteq \mm$ is a subspace of codimension $1$ generating the algebra $A$.

\begin{definition}
    Two pairs $(A_1, U_1)$ and $(A_2, U_2)$ are called \emph{equivalent} if there exists an isomorphism of algebras $\varphi\colon A_1 \to A_2$ such that $\varphi(U_1) = U_2$.
\end{definition}

Now, we are ready to give a precise statement of the generalized version of the Hassett-Tschinkel correspondence, which we already mentioned in the introduction.

\begin{theorem}\cite[Theorem 2.6]{AZ}
    Suppose that $n\in \mathbb Z_{\geq 0}$. There is a one-to-one correspondence between the following objects:
    \begin{enumerate}
        \item induced additive actions on hypersurfaces in $\PP^{n-1}$ not contained in any hyperplane;
        \item pairs $(A,U)$, where $A$ is a local commutative associative unital algebra of dimension~$n$ and the maximal ideal $\mathfrak m$ and $U\subseteq \mathfrak m$ is a hyperplane generating the algebra $A$. Such pairs $(A,U)$ are called \emph{$H$-pairs}.
    \end{enumerate}
    This correspondence is considered up to equivalences from Definitions 1.3 and 1.4.
\end{theorem}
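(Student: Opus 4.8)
The plan is to write down explicit mutually inverse constructions between the two families of objects and then check that they descend to the equivalences of Definitions 1.3 and 1.4. \emph{From an $H$-pair to an action:} let $(A,U)$ be an $H$-pair with $\dim A=n$ and maximal ideal $\mm$, so $\dim U=n-2$. For $u\in U$ the multiplication operator $m_u\colon a\mapsto ua$ on $A$ is nilpotent (as $u\in\mm$), and these operators pairwise commute, so $u\mapsto\exp(m_u)$ defines a homomorphism $\mathbb G_a^{\,n-2}\to GL(A)$ and hence a regular action on $\PP(A)=\PP^{n-1}$. I would take $X$ to be the Zariski closure of the orbit of $[1]$. The points to verify are: the stabilizer of $[1]$ is trivial — if $\exp(m_u)(1)=1+u+\tfrac12u^2+\cdots$ is proportional to $1$, then comparing $\KK$-components gives $u+\tfrac12u^2+\cdots=0$, and picking the largest $k$ with $u\in\mm^k$ yields $u\equiv u+\tfrac12u^2+\cdots\not\equiv0\pmod{\mm^{k+1}}$ unless $u=0$ — so the orbit, hence $X$, has dimension $n-2$ and $X$ is a hypersurface; a linear functional vanishing on $X$ vanishes on every $\exp(m_u)(1)$, hence on the subalgebra generated by $1$ and $U$, which is all of $A$, so $X$ lies in no hyperplane; and the action is effective, since it is already effective on $\PP^{n-1}$ (a scalar unipotent operator is the identity, so the homomorphism to $PGL(A)$ is injective) and $X$ spans $\PP^{n-1}$. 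By construction this additive action extends to $\PP^{n-1}$, so it is induced.

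\emph{From an action to an $H$-pair:} conversely, given an induced additive action of $\mathbb G_a^m$ on a hypersurface $X\subseteq\PP^{n-1}$ not lying in a hyperplane, I would extend it to $\PP^{n-1}$ and lift it to a linear representation $\rho\colon\mathbb G_a^m\to GL(V)$, $V=\KK^{n}$, with unipotent image — possible because unipotent groups have no nontrivial characters and no obstruction to such a lift. Differentiation gives pairwise commuting nilpotent operators $N(x)$, $x\in\KK^m$, whose common kernel is trivial by effectiveness. Choose $v_0\in V$ over a point of the open orbit and let $A\subseteq\mathrm{End}(V)$ be the commutative subalgebra generated by $\mathrm{id}$ and the $N(x)$. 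Since the orbit is dense in $X$ and $X$ spans $\PP^{n-1}$, the vectors $\exp(N(x))v_0$ span $V$; expanding the exponentials and polarizing (in characteristic zero) shows $A\cdot v_0=V$, so $v_0$ is a cyclic vector and $a\mapsto a v_0$ is a linear isomorphism $A\xrightarrow{\ \sim\ }V$. In particular $\dim A=n$ and $m=n-2$. As the $N(x)$ are nilpotent, $A$ modulo its nilradical is spanned by $\mathrm{id}$, so $A$ is local with $\mm=\mathrm{nil}(A)\supseteq N(\KK^m)$; and $U:=N(\KK^m)$ has dimension $n-2$ and generates $A$. Thus $(A,U)$ is an $H$-pair.

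\emph{Mutual inverseness, equivalences, and the main obstacle:} under the identification $A\cong V$ sending $v_0\mapsto1$, each $N(x)$ becomes multiplication by $N(x)v_0\in U$, so the representation reconstructed from $(A,U)$ agrees with the original one, while the reverse composition is immediate from the definitions. I would then check that an algebra isomorphism carrying $U_1$ to $U_2$ transports the exponentiated action and the base point, producing an equivalence in the sense of Definition 1.3, and, conversely, that a pair $(\psi,\varphi)$ as in Definition 1.3 induces a linear isomorphism $V_1\to V_2$ intertwining the two cyclic-vector constructions, hence an algebra isomorphism preserving the subspaces $U_i$. The two genuinely substantive steps are the dual claims that the orbit closure in $\PP(A)$ is really a hypersurface (triviality of the stabilizer of $[1]$) and that $v_0$ is a cyclic vector ($\dim A=n$); both come down to filtration bookkeeping with the powers of $\mm$ together with the hypothesis that $X$ spans $\PP^{n-1}$. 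Tracking the ambiguities in the linear lift of the action and in the choice of $v_0$ within its line, so that the bijection is well defined on equivalence classes, is more bookkeeping than difficulty.
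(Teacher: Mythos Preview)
Your proposal is correct and follows essentially the same approach as the paper's sketch: in both directions you use exponentiated multiplication on $\PP(A)$ for the forward map and the cyclic-vector identification $A\cong V$ (via the subalgebra of $\mathrm{End}(V)$ generated by the lifted representation) for the reverse, just as the paper outlines before deferring details to \cite{AZ}. You supply more of the verifications (stabilizer triviality, spanning, cyclicity) than the paper does, and your definition $U=N(\KK^m)$ via the Lie-algebra image is the precise version of what the paper loosely phrases as the span of $\rho(\mathbb G_a^{n-2})$.
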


Let us sketch the construction of this correspondence. Suppose that we are given a pair $(A, U)$ satisfying the conditions of Theorem 1.5. Let $p\colon A \setminus \{0\} \to \PP(A)\cong \PP^{n-1}$ be the canonical projection. The corresponding projective hypersurface $X$ is defined as follows:

$$X = p(\overline{\KK^{\times}\exp U}),$$
i.e. it is the projectivization of the Zariski closure of the subset $\KK^{\times}\exp U\subseteq A\setminus\{0\}$. It is clear that $\exp(U)$ can be identified with $\mathbb G_a^{n - 2}$, hence the multiplication by the elements of $\exp(U)$ defines an action of $\mathbb G_a^{n - 2}$ on $\PP(A)$. It is easy to see that $X$ is preserved under this multiplication, so this gives us an induced additive action of $\mathbb G_a^{n - 2}$ on $X \subseteq \PP(A) \cong \PP^{n-1}$ as desired.

Conversely, suppose that we are given an induced additive action of $\mathbb G_a^{n-2}$ on a hypersurface $X\subseteq \PP^{n-1} = \PP(V)$, where $V$ is an $n$-dimensional vector space. It can be shown that this action can be lifted to an action of $\mathbb G_a^{n-2}$ on $V$, which gives us a faithful representation $\rho \colon \mathbb G_a^{n-2} \to \textnormal{GL}_m(\KK)$. Define $A$ as the subalgebra of $\textnormal{Mat}_m(\KK)$ generated by $\rho(\mathbb G_a^{n-2})$ and $U$ as the vector subspace of $A$ spanned by $\rho(\mathbb G_a^{n-2})$. One can check that $(A, U)$ is a pair satisfying the conditions of Theorem 1.5. The details can be found in \cite[Theorem~1.38]{AZ}.


Given an $H$-pair $(A, U)$, one can find the equation of the corresponding projective hypersurface as follows (for more details, see \cite[Chapter 2.2]{AZ}). Denote by $\pi$ the canonical projection $\pi\colon U \to U/\mm$. Let $d$ be the greatest positive integer such that $\mm^d \ne 0$. Then the corresponding projective hypersurface is given by the homogeneous polynomial 

$$z_0^d\pi(\ln(1 + \frac{z}{z_0})) = 0$$

for $z_0 + z\in A = \KK \oplus \mm$, $z_0\in\KK$, $z\in \mm$. This hypersurface is irreducible and has degree $d$.

In Sections 2 and 3, we illustrate this with several examples.

\begin{definition}
    Suppose that a projective hypersurface $X\subseteq \PP^n$ is given by the equation $f(z_0, z_1, \ldots, z_n)~=~0$, where $f$ is a homogeneous polynomial. The hypersurface $X$ is called \emph{non-degenerate} if one of the following equivalent conditions holds:
    \begin{enumerate}
    \item there exists no linear transform of variables reducing the number of variables in $f$;\\
    \item the hypersurface $X$ is not a projective cone over a hypersurface $X_0\subseteq \PP^k$ in a projective subspace $\PP^k \subseteq \PP^n$ for some $k < n$.
    \end{enumerate}
\end{definition}

The following theorem was proved in \cite[Theorem 2.30]{AZ}.

\begin{theorem}
    Induced additive actions on hypersurfaces of degree $d$ in $\PP^{n+1}$ are in one-to-one correspondence with pairs $(A, U)$, where $A$ is a Gorenstein local algebra of dimension~$n+2$ with the socle $\mathfrak m^d$ and $U\subseteq \mathfrak m$ is a complementary hyperplane to $\Soc A$.
\end{theorem}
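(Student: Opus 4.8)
The plan is to upgrade the generalized Hassett--Tschinkel correspondence. By Theorem 1.5 applied with $\PP^{n+1}=\PP^{(n+2)-1}$, induced additive actions on hypersurfaces in $\PP^{n+1}$ not lying in a hyperplane are identified with $H$-pairs $(A,U)$ with $\dim A=n+2$, so it suffices to read off from $(A,U)$ the degree of the associated hypersurface $X\subseteq\PP(A)$ and to decide whether $X$ is non-degenerate (here, as in the correspondence recalled in the introduction, the hypersurfaces on the left are the non-degenerate ones). Fix a linear functional $\pi\colon A\to\KK$ with $\pi(1)=0$ and $\ker(\pi\restr{\mm})=U$, so that $\ker\pi=\KK\oplus U$, and let $B$ be the symmetric bilinear form $B(a,b)=\pi(ab)$ on $A$; its radical $\mathrm{rad}\,B=\{v\in A:\pi(vA)=0\}$ is a proper ideal of the local ring $A$, hence $\mathrm{rad}\,B\subseteq\mm$. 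I will prove: (a) $X$ is non-degenerate if and only if $B$ is non-degenerate; (b) $B$ is non-degenerate if and only if $A$ is Gorenstein and $U$ is a complementary hyperplane to $\Soc A$; and (c) when these hold, $\deg X=d$ and $\Soc A=\mm^{d}$, where $d$ is the largest integer with $\mm^{d}\ne 0$.

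For (a), recall that a hypersurface $V(F)\subseteq\PP^{N}$ is non-degenerate precisely when no nonzero directional derivative $\partial_{v}F$ vanishes identically. Using the equation
\[
F(z_{0}+z)=z_{0}^{d}\,\pi(\ln(1+z/z_{0}))=\sum_{k\ge 1}\frac{(-1)^{k-1}}{k}\,z_{0}^{d-k}\,\pi(z^{k})
\]
from the construction after Theorem 1.5 (the sum is finite because $z\in\mm$, $\mm^{d+1}=0$, and $d\ge 2$ since a non-degenerate hypersurface is not a hyperplane), a direct differentiation gives, for $v\in\mm$,
\[
\partial_{v}F(z_{0}+z)=\sum_{k\ge 0}(-1)^{k}\,z_{0}^{d-1-k}\,\pi(vz^{k}),
\]
which vanishes identically if and only if $\pi(vz^{k})=0$ for all $k$ and all $z\in\mm$, equivalently $\pi(vA)=0$, i.e.\ $v\in\mathrm{rad}\,B$. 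For $v=1+v'$ with $v'\in\mm$ (which we may assume after scaling), comparing the coefficients of $z_{0}^{d-1}$ and of $z_{0}^{d-2}$ in $\partial_{v}F$ shows that $\partial_{v}F\equiv 0$ would force first $\pi(v')=0$ and then $v'-(d-1)\cdot 1\in\mathrm{rad}\,B\subseteq\mm$, which is impossible. Hence the space of cone directions of $X$ is exactly $\mathrm{rad}\,B$, and (a) follows.

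For (b): if $\mathrm{rad}\,B=0$ and $s\in\Soc A\cap U$, then for every $a=a_{0}+a'\in\KK\oplus\mm$ we get $\pi(sa)=a_{0}\pi(s)+\pi(sa')=0$ (the first term since $s\in U$, the second since $sa'=0$), so $s\in\mathrm{rad}\,B$ and $s=0$; thus $\Soc A\cap U=0$, which together with $\Soc A\ne 0$ and $\dim U=\dim\mm-1$ forces $\dim\Soc A=1$ and $\mm=U\oplus\Soc A$. Conversely, if $A$ is Gorenstein with $U$ complementary to $\Soc A$, then $\pi$ restricts to an isomorphism $\Soc A\to\KK$; if $0\ne v\in\mathrm{rad}\,B\subseteq\mm$, choose the largest $e$ with $v\mm^{e}\ne 0$ and some $0\ne w=vx\in v\mm^{e}$, so that $w\mm\subseteq v\mm^{e+1}=0$ and hence $0\ne w\in\Soc A$; then $0\ne\pi(w)=\pi(vx)=B(v,x)=0$, a contradiction, so $\mathrm{rad}\,B=0$. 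For (c), every monomial of the polynomial $F$ above has degree $d$, and $F\ne 0$ since its restriction to $z_{0}=1$ is $\pi(\ln(1+z))$, a polynomial in the coordinates of $z$ with nonzero linear part $\pi(z)$; moreover the term of $F$ with no factor of $z_{0}$ is $\frac{(-1)^{d-1}}{d}\pi(z^{d})$, which is nonzero because $\pi(\mm^{d})=\pi(\Soc A)\ne 0$ by (b), so $z_{0}$ does not divide $F$ and the construction after Theorem 1.5 shows that $X$ is irreducible of degree $d$; finally $\Soc A=\mm^{d}$ by the remark after Definition 1.2. Combining (a)--(c) with Theorem 1.5 yields the asserted bijection.

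The step I expect to be the main obstacle is the part of (a) ruling out cone directions with nonzero unit component: this needs careful bookkeeping of the terms of $F$ and of $\partial_{v}F$ graded by the power of $z_{0}$, although only the top two $z_{0}$-degrees enter. A secondary technical point is making precise in (c) that, once $z_{0}$ does not divide $F$, the polynomial $F$ itself --- not merely some homogeneous polynomial of degree $d$ --- cuts out the irreducible hypersurface $X$; this is exactly what the construction recalled after Theorem 1.5 provides.
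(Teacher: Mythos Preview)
The paper does not prove this theorem: it is quoted verbatim as \cite[Theorem~2.30]{AZ} and no argument is supplied in the present paper. Consequently there is no ``paper's own proof'' to compare your proposal against.

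That said, your argument is correct and self-contained, and it is essentially the natural one: you identify the cone directions of $X=V(F)\subseteq\PP(A)$ with the radical of the bilinear form $B(a,b)=\pi(ab)$, and then show $\mathrm{rad}\,B=0$ is equivalent to $A$ being Gorenstein with $U$ complementary to $\Soc A$. The computation of $\partial_v F$ for $v\in\mm$ is clean; for $v=1+v'$ your use of the $z_0^{d-1}$ and $z_0^{d-2}$ coefficients is the right way to force $(d-1)\cdot 1\in\mm$, a contradiction since $d\ge 2$. Two small remarks: the equivalence ``$\pi(vz^{k})=0$ for all $k$ and all $z$'' $\Leftrightarrow$ ``$\pi(vA)=0$'' already follows from the cases $k=0,1$ alone (no polarization needed), and in (c) the paper's construction after Theorem~1.5 explicitly asserts that $X$ is irreducible of degree $d$, so you may simply cite that rather than re-prove non-divisibility by $z_0$.
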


Another theorem proved in \cite[Theorem 2.32]{AZ} is the following.

\begin{theorem}
    Suppose that $X\subseteq \mathbb P^{n-1}$ is a non-degenerate hypersurface. Then there exists at most one induced additive action on $X$ up to equivalence.
\end{theorem}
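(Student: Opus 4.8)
The plan is to reconstruct the $H$-pair $(A,U)$ from the hypersurface $X$ itself, so that the generalized Hassett--Tschinkel correspondence (Theorem~1.5) then converts this into uniqueness of the action. By Theorem~1.7, an induced additive action on a non-degenerate hypersurface $X\subseteq\PP^{n-1}$ of degree~$d$ corresponds to a pair $(A,U)$ with $A$ Gorenstein local of dimension~$n$, $\Soc A=\mm^{d}$, and $\mm=U\oplus\Soc A$, and equivalence of pairs (Definition~1.4) matches equivalence of actions (Definition~1.3). Given two induced additive actions on projectively equivalent hypersurfaces, we may assume they act on one and the same $X\subseteq\PP(V)$ with $V=A_1=A_2$ as vector spaces; it then suffices to produce an algebra isomorphism $A_1\to A_2$ carrying $U_1$ to $U_2$.

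First I would record the shape of the defining polynomial. Fix a basis $u_1,\dots,u_{n-2}$ of $U$, a generator $e$ of $\Soc A$, and let $z_0$ be the coordinate on $\KK\subseteq A$, $z_i$ the coordinate of $u_i$, and $z_{n-1}$ that of $e$. Since $e\,\mm=0$, expanding the formula $f=z_0^{d}\,\pi(\ln(1+z/z_0))$ from the excerpt yields
\begin{equation*}
f \;=\; z_0^{\,d-1}z_{n-1}\;+\;\sum_{j=2}^{d}\frac{(-1)^{j-1}}{j}\,z_0^{\,d-j}\,\sigma\!\Big(\big(\textstyle\sum_{i=1}^{n-2}z_iu_i\big)^{\!j}\Big),
\end{equation*}
where $\sigma\colon A\to\KK$ is the linear functional dual to $e$ that vanishes on $\KK\oplus U$. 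Two features stand out: $f$ is linear in the socle coordinate $z_{n-1}$ with coefficient $z_0^{\,d-1}$, and setting $z_0=1$ turns $f$ into $z_{n-1}+h(z_1,\dots,z_{n-2})$ with $h$ of vanishing order $\geq 2$ and independent of $z_{n-1}$. The coefficients of $h$ are, up to explicit constants, the numbers $\sigma(u^{\alpha})$ for $2\leq|\alpha|\leq d$; together with $\sigma(u_i)=0$ they constitute a Macaulay inverse-system generator $\widetilde F\in\KK[Z_1,\dots,Z_{n-2}]$, and $A\cong\KK[\partial_1,\dots,\partial_{n-2}]/\operatorname{Ann}(\widetilde F)$ via $\partial_i\mapsto u_i$, with $U$ the span of the $\partial_i$. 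Here the Gorenstein hypothesis --- which by Theorem~1.7 is precisely non-degeneracy of $X$ --- is exactly what makes this apolar algebra the correct~$A$. Consequently $(A,U)$ is recovered, up to the equivalence of $H$-pairs (which absorbs the residual freedom of a linear change of the $z_i$), from $X$ together with the hyperplane $\PP(\mm)=\{z_0=0\}$ and the point $[\Soc A]=[0:\cdots:0:1]$.

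It remains to see that this hyperplane and point are intrinsic to $X$. A short computation identifies $[\Soc A]$ as the unique $\mathbb G_a^{n-2}$-fixed point of $X$: a line $\KK v$ is fixed iff $\mm v\subseteq\KK v$, and $\mm v\neq 0$ would make $v$ be annihilated by a unit, which is impossible, so $v\in\Soc A$. This point lies on $X$ (by polarization $\mm^{d}\neq 0$ yields some $v\in U$ with $v^{d}\neq 0$, and then $[\exp(tv)]\to[v^{d}]=[\Soc A]$ as $t\to\infty$), the multiplicity of $X$ at $[\Soc A]$ is exactly $d-1$, and its tangent cone is the hyperplane $\PP(\mm)$ taken with multiplicity $d-1$. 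Since $X$ is non-degenerate it has no point of multiplicity $d$ (such a point is a vertex of a cone structure), so $[\Soc A]$ is a point of maximal multiplicity at which the tangent cone is a multiple hyperplane. Thus from $X$ one selects such a point $P$, lets $\PP(\mm)$ be the reduced hyperplane underlying its tangent cone, and runs the reconstruction above. Applied to the two given actions this yields points $P_1,P_2$, and one must verify that $P_1=P_2$ --- equivalently, that all such points lie in a single $\operatorname{Aut}(X,\PP^{n-1})$-orbit. Granting this, the two reconstructions agree up to the allowed symmetries, the $H$-pairs are equivalent, and Theorem~1.5 completes the argument.

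The step I expect to be the main obstacle is this last one: excluding two genuinely inequivalent socle points, i.e.\ showing that any point of $X$ of multiplicity $d-1$ with linear tangent cone that arises from an additive action is carried to any other by an element of $\operatorname{Aut}(X,\PP^{n-1})$. This is exactly where non-degeneracy is indispensable: for a projective cone the bound ``multiplicity $<d$'' is lost, the socle point ceases to be canonical, and the reconstruction genuinely fails --- consistently with the conjecture established in Section~2.
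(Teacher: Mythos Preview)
The paper does not contain a proof of Theorem~1.8; the result is quoted from \cite[Theorem~2.32]{AZ} as background for the paper's own contribution (the converse direction, Theorem~2.2). There is thus no in-paper argument to compare your proposal against, and it has to be judged on its own.

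Your strategy is coherent and the preliminary steps are essentially correct: the shape of $f$, the identification of the unique $\mathbb G_a^{n-2}$-fixed point with $[\Soc A]$, the multiplicity $d-1$ and the $(d{-}1)$-fold hyperplane tangent cone at that point, and the Macaulay inverse-system reconstruction of $A$ from the dehomogenized equation are all sound. But the argument is incomplete, and you say so yourself. The unresolved step --- that the socle points $P_1,P_2\in X$ arising from two actions lie in a single $\operatorname{Aut}(X,\PP^{n-1})$-orbit --- is not a technicality; it is the heart of the matter. Your geometric characterization ``multiplicity $d-1$ with linear tangent cone'' does not single out a unique point: for $d=2$ every point of the smooth quadric qualifies, and for larger $d$ the singular locus of $X$ is typically positive-dimensional (see the table at the end of Section~3), so there may be many points of the required type. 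Restricting to ``those $P$ that arise from some additive action'' does not help either, since that is exactly the set on which you need transitivity, and you have given no mechanism to carry one such $P$ to another.

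Put differently: what you have actually shown is that $(A,U)$ can be reconstructed from $X$ \emph{together with a choice of flag $P\in H$}, not from $X$ alone. Two actions hand you two such flags, and nothing in the proposal bridges that gap. As written, this is a reasonable proof plan with the decisive step left open; to turn it into a proof you would need either an intrinsic, action-independent characterization of the socle point (or of the hyperplane $\PP(\mm)$), or an argument that bypasses the geometric flag entirely and compares the two algebra structures directly.
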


In \cite{AZ}, the authors introduce the notion of reduction of an induced additive action and prove its properties. Let an $H$-pair $(A, U)$ correspond to an induced additive action on a projective hypersurface $X \subseteq \PP^{n}$. Let $J$ be an ideal of $A$ of dimension $n - k$ such that $J\subseteq U$. 

\begin{proposition}\cite[Proposition 2.20 and Corollary 2.23]{AZ}
    The pair $(A/J, U/J)$ corresponds to an induced additive action on a projective hypersurface $X_0\subseteq \PP^{k}$, and $X$ is a projective cone over $X_0$, i.e. for some choice of coordinates in $\PP^{n}$ and $\PP^{k}$ the equations of the hypersurfaces~$X$ and $X_0$ are the same. Moreover, if $J$ is the maximal (with respect to inclusion) ideal of~$A$ contained in~$U$, then $X_0$ is a non-degenerate hypersurface in $\PP^{k}$.
\end{proposition}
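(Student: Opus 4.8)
The plan has two parts, matching the two assertions.

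\emph{Part 1 (the cone statement).} First I would check that $(A/J,U/J)$ is again an $H$-pair. Since $J\subseteq\mm$, the algebra $A/J$ is local with maximal ideal $\mm/J$; since $U$ generates $A$, the subspace $U/J$ generates $A/J$; and $\dim(\mm/J)-\dim(U/J)=\dim\mm-\dim U=1$, so $U/J$ is a hyperplane in $\mm/J$. By Theorem~1.5, $(A/J,U/J)$ then corresponds to an induced additive action on a hypersurface $X_0\subseteq\PP(A/J)\cong\PP^{k}$. To identify $X$ as a cone over $X_0$, I would use the explicit defining equation recalled before Definition~1.7. Fix a basis of $A$ refining the flag $0\subseteq J\subseteq U\subseteq\mm\subseteq A$, and take the images of the non-$J$ basis vectors as a basis of $A/J$, so that the coordinates on $\PP(A/J)$ form a subset of those on $\PP(A)$. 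Because the quotient map $q\colon A\to A/J$ is an algebra homomorphism, it commutes with $z\mapsto\ln(1+z/z_0)$ and intertwines $\pi\colon\mm\to\mm/U$ with the analogous projection for $A/J$, via the isomorphism $(\mm/J)/(U/J)\cong\mm/U$. It follows that the defining polynomial $z_0^{d}\pi(\ln(1+z/z_0))$ of $X$, written with $z=\sum z_ie_i$, in fact does not involve the $J$-coordinates and coincides verbatim with the defining polynomial of $X_0$ — which is precisely the statement that $X$ is a projective cone over $X_0$. (One can also see this geometrically: since $J\subseteq U$ and $J$ is an ideal, $\KK^{\times}\exp(U)+J=\KK^{\times}\exp(U)$, so the affine cone of $X$ is invariant under translations by $J$.)

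\emph{Part 2 (maximality $\Rightarrow$ non-degeneracy).} Assume $J$ is the maximal ideal of $A$ contained in $U$. If $\bar I\subseteq U/J$ were a nonzero ideal of $A/J$, then $q^{-1}(\bar I)$ would be an ideal of $A$ with $J\subsetneq q^{-1}(\bar I)\subseteq U$, contradicting maximality; hence $U/J$ contains no nonzero ideal of $A/J$. So it suffices to prove the following statement about the $H$-pair $(B,W):=(A/J,U/J)$: if $W$ contains no nonzero ideal of $B$, then the associated hypersurface $Y=\{F=0\}\subseteq\PP(B)$ is non-degenerate. (Here $d\ge 2$ automatically: if $\mm_B^{2}=0$, a subspace of $\mm_B$ generating $B$ would have to be all of $\mm_B$, contradicting that $W$ is a hyperplane in $\mm_B$.)

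I would prove the contrapositive. If $Y$ is degenerate, then by Definition~1.7 there is a nonzero $v=v_0\cdot 1+v_\mm$, with $v_0\in\KK$ and $v_\mm\in\mm_B$, whose directional derivative annihilates $F$, i.e.\ $\partial_v F\equiv 0$. Writing $t=z/z_0$ and using that $\ln$ and $(1+t)^{-1}$ are polynomials in the nilpotent $t$, a direct differentiation of $F=z_0^{d}\pi(\ln(1+t))$ gives $\partial_v F=z_0^{d-1}\,\pi\bigl(v_0 d\ln(1+t)+(1+t)^{-1}(v_\mm-v_0 t)\bigr)$. After the substitution $s=\ln(1+t)$, which is a bijection of $\mm_B$, the vanishing of $\partial_v F$ becomes the requirement that
\[
g(s):=v_0 d\,s+e^{-s}v-v_0\in W\qquad\text{for all }s\in\mm_B .
\]
Taking $s=0$ gives $v_\mm=g(0)\in W$; moreover $v_\mm\ne 0$, since otherwise $v=v_0\ne 0$ and the coefficient of $\varepsilon$ in $g(\varepsilon s)$ would force $v_0(d-1)\mm_B\subseteq W$, hence $\mm_B\subseteq W$ (using $d\ge 2$), which is absurd. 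For general $s$, the coefficient of $\varepsilon$ in $g(\varepsilon s)$ equals $s\,w$ with $w:=v_0(d-1)\cdot 1-v_\mm$, so $\mm_B w\subseteq W$. Now $\mm_B w$ is an ideal of $B$; if it is nonzero, we are done. If $\mm_B w=0$, then $w\in\Soc B\subseteq\mm_B$, which forces $v_0=0$ and hence $v_\mm=-w\in\Soc B$, so that $\KK v_\mm$ is a nonzero ideal of $B$ contained in $W$. In either case $W$ contains a nonzero ideal, a contradiction; applying this to $(A/J,U/J)$ shows that $X_0$ is non-degenerate.

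The routine ingredients are the $H$-pair verification, the third-isomorphism-theorem bookkeeping in Part~1, and the classical fact that a degenerate hypersurface admits a direction $v$ with $\partial_v F\equiv 0$. The step I expect to be the real work is the computation in the last paragraph: bringing $\partial_v F$ into the displayed closed form and then observing that its linearization in $v$ packages exactly into the ideal $\mm_B w\subseteq W$ — together with disposing of the borderline case $w\in\Soc B$, which is what pins the conclusion at ``non-degenerate'' rather than yielding something stronger.
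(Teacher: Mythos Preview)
The paper does not prove Proposition~1.9 at all: it is quoted verbatim from \cite[Proposition~2.20 and Corollary~2.23]{AZ}, with no argument given. So there is nothing in this paper to compare your proposal against, and what you have written is in effect an independent proof.

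Your Part~1 is the natural argument and is essentially correct. One imprecision: you assert that the defining polynomial of $X$ ``coincides verbatim'' with that of $X_0$. Since the exponent $d$ in the formula $z_0^{d}\pi(\ln(1+z/z_0))$ is the nilpotency index of $\mm$, while $X_0$ is built with the nilpotency index $d'$ of $\mm/J$, the two homogeneous polynomials can differ by a factor $z_0^{\,d-d'}$ when $\mm^{d}\subseteq J$. This does not affect the cone conclusion, but the word ``verbatim'' oversells it.

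Your Part~2 is a clean direct computation and the algebra is correct; in particular the derivative formula $\partial_v F=z_0^{d-1}\pi\!\bigl(v_0 d\,s+e^{-s}v-v_0\bigr)$ and the extraction of $\mm_B w\subseteq W$ from the $\varepsilon$--linearisation both check out. The one genuine gap is the passage from ``$Y$ is degenerate'' to ``$\partial_v F\equiv 0$ for some $v\neq 0$'': this is only automatic once you know that $F$ is the \emph{irreducible} equation of $Y$, not merely some homogeneous polynomial vanishing on~$Y$. Under your standing hypothesis that $W$ contains no nonzero ideal one has $\mm_B^{d}\not\subseteq W$, so $F$ is not divisible by $z_0$; and the dehomogenisation $F(1,z)=\pi(\ln(1+z))$ is the pullback of the linear form $\pi$ under the polynomial automorphism $z\mapsto\ln(1+z)$ of $\mm_B$, hence irreducible. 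Adding these two sentences closes the gap. Without them the step you labelled ``classical'' hides exactly the content that makes the non-degenerate case special.
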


\section{Non-uniqueness of additive action on degenerate hypersurfaces.} In this section, we prove the conjecture stated in the introduction. Let us first recall the statement. Suppose that $X\subseteq \PP^n$ is a degenerate hypersurface admitting an induced additive action. We are going to show that there exist at least two non-equivalent induced additive actions on $X$.

Let $(A, U)$ be the $H$-pair corresponding to the given induced additive action on $X$. Since $X$ is degenerate we can apply the reduction procedure from Proposition 1.9. This gives us a new $H$-pair $(A_0, U_0)$ such that $\dim A_0 < \dim A$.

The crucial step is the following. We need to construct two non-equivalent $H$-pairs $(A_1, U_1)$ and $(A_2, U_2)$ that go to $(A_0, U_0)$ under reduction and such that $$\dim A_1 = \dim A_2 = \dim A_0 + 1.$$ We are going to give an explicit construction of $(A_1, U_1)$ and $(A_2, U_2)$.

Denote by $\mm_0$ the maximal ideal of $A_0$. Choose $a_1, a_2, \ldots, a_k\in \mathfrak m_0\subseteq A_0$ such that the classes of these $k$ elements modulo $\mm_0^2$ form a basis of $\mm_0/\mm_0^2$. Then by Nakayama's Lemma, $a_1, a_2, \ldots, a_k$ generate the algebra~$A_0$. So, $A_0$ is isomorphic to $\KK[x_1, x_2, \ldots, x_k]/I$ for some ideal $I\subseteq \KK[x_1, x_2, \ldots, x_k]$; under this isomorphism, $x_1, x_2, \ldots, x_k$ are sent to $a_1, a_2, \ldots, a_k$, respectively. Since $a_1, a_2, \ldots, a_k$ form a basis of $\mm_0/\mm_0^2$, each $f\in I$ contains no linear term (the constant term of $f$ is also $0$).

We construct $(A_1, U_1)$ as follows. Let $A_1$ be the quotient of the polynomial algebra $\KK[x_1, x_2, \ldots, x_k, x_{k+1}]$ modulo the ideal $I_1$ defined as $$I_1 = I\KK[x_1, x_2, \ldots, x_k, x_{k+1}] + (x_1x_{k+1}) + (x_2x_{k+1}) + \ldots + (x_kx_{k+1}) + (x_{k+1}^2).$$
Informally speaking, we add a new variable $x_{k+1}$ to the algebra $A_0$ subject to the relations $x_ix_{k+1} = 0$ for any $i = 1, 2, \ldots, k, k+1$.

Clearly, $A_1 = A_0 \oplus \langle x_{k+1}\rangle$, so $\dim A_1 = \dim A_0 + 1$. We define $U_1$ as $U_1 = U_0 \oplus \langle x_{k+1}\rangle$. Now, $\langle x_{k+1}\rangle$ is an ideal of $A_1$ contained in $U_1$ and it is clear that if we apply the reduction of the $H$-pair $(A_1, U_1)$ with respect to this ideal, we obtain the $H$-pair $(A_0, U_0)$. So, the pair $(A_1, U_1)$ satisfies our conditions.

The construction of $(A_2, U_2)$ is more complicated. The informal idea is to shrink the ideal~$I$ instead of adding a new variable to $A_0$. In order to do this, we need the following technical lemma.

\begin{lemma}
    Let $J\subseteq \KK[x_1, x_2, \ldots, x_k]$ be a non-zero ideal. Then there exists a system $f_1, f_2, \ldots, f_l$ of generators of $J$ such that the ideal generated by $$f_1, f_2, \ldots, f_{l-1}, x_1f_l, x_2f_l, \ldots, x_kf_l$$ is strictly contained in~$J$.
\end{lemma}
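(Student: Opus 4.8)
The plan is to produce such a system of generators by starting from an arbitrary finite generating set $g_1, \dots, g_m$ of $J$ and modifying it. The key observation is that $\mathfrak{m} J \subsetneq J$ strictly, where $\mathfrak{m} = (x_1, \dots, x_k)$: this follows from Nakayama's Lemma applied to the finitely generated module $J$ over the local ring $\KK[x_1,\dots,x_k]_{\mathfrak m}$, or more concretely from the fact that $J$ contains a nonzero polynomial of minimal degree $e$, and no element of $\mathfrak m J$ can have degree-$e$ homogeneous component equal to that of such an element unless... — actually the cleanest route is a degree argument. Let $e$ be the least degree of a nonzero element of $J$, and let $J_e$ denote the (nonzero, finite-dimensional) $\KK$-vector space of degree-$e$ homogeneous components of elements of $J$ of order $\geq e$, i.e. the image of $J \cap \mathfrak m^e$ in $\mathfrak m^e / \mathfrak m^{e+1}$. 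Every element of $\mathfrak m J$ lies in $\mathfrak m^{e+1} \cdot (\text{stuff})$... more precisely, since $J \subseteq \mathfrak m^e$, we have $\mathfrak m J \subseteq \mathfrak m^{e+1}$, so the image of $\mathfrak m J$ in $\mathfrak m^e/\mathfrak m^{e+1}$ is zero, whereas the image of $J$ is the nonzero space $J_e$. Hence $\mathfrak m J \subsetneq J$.

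Next I would convert this into a statement about a \emph{single} generator. Pick generators $f_1, \dots, f_l$ of $J$. Then $x_i f_j \in \mathfrak m J$ for all $i, j$, so the ideal $(f_1, \dots, f_{l-1}, x_1 f_l, \dots, x_k f_l)$ is contained in $(f_1, \dots, f_{l-1}) + \mathfrak m J$. It therefore suffices to choose the generating set so that $(f_1, \dots, f_{l-1}) + \mathfrak m J \subsetneq J$, equivalently so that $f_l$ is not contained in $(f_1, \dots, f_{l-1}) + \mathfrak m J$. This is exactly the condition that the image of $f_l$ in the finite-dimensional $\KK$-vector space $J / \mathfrak m J$ is nonzero and is not in the span of the images of $f_1, \dots, f_{l-1}$; and since $f_1, \dots, f_l$ generate $J$ as an ideal, their images span $J/\mathfrak m J$, which is nonzero by the previous paragraph. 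So at least one $f_j$ has nonzero image, and I may reorder so that the images of $f_1, \dots, f_{l-1}$ do not span all of $J/\mathfrak m J$ while together with $f_l$ they do — concretely, discard generators whose images are already in the span of the others until the images of $f_1, \dots, f_l$ form a basis of $J / \mathfrak m J$ (this uses Nakayama to guarantee the reduced set still generates $J$, though for the lemma we only need that it generates, and a minimal generating set certainly does). Then $f_l \notin (f_1,\dots,f_{l-1}) + \mathfrak m J$, so the ideal in the statement is strictly contained in $J$.

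The main obstacle, such as it is, is the bookkeeping in the second paragraph: one must be careful that after replacing the generating set by one whose images in $J/\mathfrak m J$ are linearly independent, the new set still generates $J$ — this is precisely the content of Nakayama's Lemma for the finitely generated module $J$, and it also shows every minimal generating set of $J$ has size $\dim_\KK J/\mathfrak m J$. A subtle point worth stating explicitly is that $J/\mathfrak m J$ being nonzero requires $J \neq 0$, which is the hypothesis; if $J = \mathfrak m$ then $J/\mathfrak m J = \mathfrak m/\mathfrak m^2$ has dimension $k$, and the lemma recovers the familiar fact that $x_1 \mathfrak m + \dots + x_k \mathfrak m = \mathfrak m^2 \subsetneq \mathfrak m$. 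Everything else is routine linear algebra over $\KK$.
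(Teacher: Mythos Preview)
Your overall strategy---show $\mathfrak m J\subsetneq J$ by a degree argument, then arrange the generators so that $f_l\notin(f_1,\ldots,f_{l-1})+\mathfrak m J$---is sound and in fact more conceptual than the paper's proof. But the step where you ``discard generators whose images are already in the span of the others'' and invoke Nakayama to guarantee the reduced set still generates $J$ does not work as stated: $\KK[x_1,\ldots,x_k]$ is not local and $\mathfrak m$ is not contained in its Jacobson radical, so Nakayama only tells you the reduced set generates the localization $J_{\mathfrak m}$, not $J$ itself. A concrete failure: take $R=\KK[x]$, $J=(x^2)$, generators $f_1=x^2$ and $f_2=x^2(1+x)$. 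Both map to the same nonzero class in $J/\mathfrak m J\cong\KK$, so your procedure would discard one of them; discarding $f_1$ leaves $f_2=x^2(1+x)$, which generates $J_{\mathfrak m}$ but not $J$, since $1+x$ is a unit in $R_{\mathfrak m}$ but not in $R$.

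The fix is easy and keeps your argument intact: instead of discarding, perform an invertible $\KK$-linear change of generators. Reorder so that $\bar f_1,\ldots,\bar f_s$ is a basis of $J/\mathfrak m J$ extracted from the spanning set $\bar f_1,\ldots,\bar f_l$; for each $j>s$ replace $f_j$ by $f_j-\sum_{i\le s}c_{ji}f_i$ so that its image becomes zero. This does not change the ideal generated. Now place $f_1$ last: its image is not in the span of the remaining images $\bar f_2,\ldots,\bar f_s,0,\ldots,0$, so $f_1\notin(\text{others})+\mathfrak m J$ and you are done. For comparison, the paper avoids $J/\mathfrak m J$ entirely: it starts from an arbitrary generating set and repeatedly replaces one $f_i$ by $x_1f_i,\ldots,x_kf_i$, arguing that if the ideal never shrinks then after $l$ steps every $f_i$ has been replaced, whence all generators lie in $\mathfrak m J$ and a direct minimal-degree argument gives a contradiction. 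Your route isolates the reason this works ($\mathfrak m J\subsetneq J$), at the cost of the bookkeeping above.
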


\begin{proof}
    Choose an arbitrary finite system $f_1, f_2, \ldots, f_l$ of generators of the ideal $J$. If the ideal generated by $f_1, f_2, \ldots f_{l-1}, x_1f_l, x_2f_l, \ldots, x_kf_l$ is strictly contained in $J$, we are done. Otherwise, $$f_1, f_2, \ldots f_{l-1}, x_1f_l, x_2f_l, \ldots, x_kf_l$$ is another system of generators of $J$, so we can apply our procedure to this system replacing it with $$f_1, f_2, \ldots, f_{l-2}, x_1f_{l-1}, x_2f_{l-1}, \ldots, x_kf_{l-1}, x_1f_l, x_2f_l, \ldots, x_kf_l.$$
    We can proceed in a similar way, replacing at each step some $f_i$ with $x_1f_i, x_2f_i, \ldots, x_kf_i$. If the ideal $J$ becomes smaller at some step, we are done. Otherwise, after the $l$-th step we obtain a system
    $$x_1f_1, x_2f_1, \ldots, x_kf_1; x_1f_2, x_2f_2, \ldots, x_kf_2; \ldots x_1f_l, x_2f_l, \ldots, x_kf_l$$
    generating the ideal $J$ by our assumption.
    
    Denote by $d$ the smallest positive integer such that at least one of the polynomials $f_1, f_2, \ldots, f_k$ contains a non-zero term of degree $d$. Then there is no polynomial of the form $x_if_j$ ($i = 1, 2, \ldots, k$, $j = 1, 2, \ldots, l)$ containing a non-zero term of degree less than~$d+1$. If the polynomials of this form generate $J$, then any element of $J$ contains no term of degree less than $d+1$. This gives us a contradiction, so the proof is finished.
\end{proof}

We apply Lemma 2.1 to the ideal $I\subseteq \KK[x_1, x_2, \ldots, x_k]$. Choose a system $f_1, f_2, \ldots, f_l$ of generators of $I$ such that the ideal $\tilde I = (f_1, f_2, \ldots f_{l-1}, x_1f_l, x_2f_l, \ldots, x_kf_l)$ is strictly contained in $I$. Define an algebra $A_2$ as $A_2 = \KK[x_1, x_2, \ldots, x_k]/\tilde{I}$. It is not difficult to see that $\dim A_2 = \dim A_0 + 1$. Indeed, choose a system of polynomials $S_1, S_2, \ldots, S_r$ such that their classes modulo $I$ form a basis of $A_0$. Then any polynomial $g\in\KK[x_1, x_2, \ldots, x_k]$ can be written as $$g = a_1S_1 + a_2S_2 + \ldots + a_rS_r + H,$$ where $a_1, a_2, \ldots, a_r\in \KK$, $H\in I$. Write $H$ as $$H = h_1f_1 + h_2f_2 + \ldots + h_lf_l.$$ If the constant term of $h_l$ is $c$, then $$g = a_1S_1 + a_2S_2 + \ldots a_rS_r + cf_l + \tilde H,$$ where $\tilde H = h_1f_1 + h_2f_2 + \ldots + (h_l-c)f_l$ is an element of $\tilde I$. It follows that the classes of $S_1, S_2, \ldots, S_r, f_l$ generate $A_2$ as a vector space, so $\dim A_2 \leq \dim A_0 + 1$. But $\tilde I$ is strictly contained in $I$, so the equality $\dim A_2 = \dim A_0 + 1$ holds.

Let $U_2 = U_0 \oplus \langle f_l \rangle$. The element $f_l\in A_2$ is annihilated after multiplication by any $x_i$, $i~=~1,2, \ldots, k$, so $\langle f_l \rangle$ is an ideal of $A_2$ contained in $U_2$. It is clear that the reduction of $(A_2, U_2)$ modulo this ideal gives us the $H$-pair $(A_0, U_0)$.

Denote by $\mm_i$ the maximal ideal of $A_i$, $i = 1,2$. We see that 
$$\dim \mm_1/\mm_1^2 = \dim \mm_0/\mm_0^2 + 1$$ and $$\dim \mm_2/\mm_2^2 = \dim \mm_0/\mm_0^2.$$ This proves that the $H$-pairs $(A_1, U_1)$ and $(A_2, U_2)$ are not equivalent.

Our conjecture is now proved in the case when $\dim A - \dim A_0 = 1$. It remains to treat the case when $\dim A - \dim A_0 = r\geq 2$, i.e. we need to construct two non-equivalent $H$-pairs $(\widetilde{A}, \widetilde U)$, $(\hat{A}, \hat{U})$ sent to $(A_0, U_0)$ under the reduction and such that $$\dim \widetilde A = \dim\hat{A} = \dim A_0 + k.$$ To construct the $H$-pair $(\widetilde A, \widetilde U)$, we apply the first procedure of our construction $k$ times, i.e. we add $k$ new variables to $A_0$. To construct the $H$-pair $(\hat A, \hat U)$, we add $k-1$ new variables to $A_2$. If we denote by $\widetilde\mm$, $\hat\mm$ the maximal ideals of $\widetilde A$, $\hat A$ respectively, then $$\dim \widetilde \mm/\widetilde\mm^2 = \dim \hat\mm/\hat\mm^2 + 1,$$ so the $H$-pairs $(\widetilde A, \widetilde U)$ and $(\hat A, \hat U)$ are indeed non-equivalent.
\vspace{0.3cm}

Combining our result with Theorems 1.7 and 1.8, we can now state the following theorem.

\begin{theorem}
    Suppose that $X\subseteq \PP^n$ is a hypersurface that admits an induced additive action. Then the following conditions are equivalent:
    \begin{enumerate}
        \item there is a unique induced additive action on $X$;
        \item $X$ is a non-degenerate hypersurface;
        \item the $H$-pair $(A,U)$ corresponding to the given induced additive action on $X$ is such that the algebra~$A$ is Gorenstein with the maximal ideal $\mm$ and $U\subseteq \mm$ is a hyperplane complementary to $\Soc A$.
    \end{enumerate}
\end{theorem}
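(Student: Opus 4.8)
The plan is to deduce this statement as a bookkeeping corollary of three facts already at hand: the correspondence of Theorem 1.7 between non-degenerate hypersurfaces and Gorenstein $H$-pairs, the at-most-one statement of Theorem 1.8, and the non-uniqueness result for degenerate hypersurfaces proved above in this section. Concretely, I would establish the implications $(2)\Rightarrow(1)$, $(1)\Rightarrow(2)$, and the equivalence $(2)\Leftrightarrow(3)$ separately, and then chain $(1)\Leftrightarrow(2)\Leftrightarrow(3)$. Throughout, the standing hypothesis that $X$ admits an induced additive action is used: it guarantees both existence of an action and the existence of the $H$-pair $(A,U)$ referred to in (3).

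For $(2)\Rightarrow(1)$: since $X$ is non-degenerate, Theorem 1.8 says $X$ carries at most one induced additive action up to equivalence; as it carries at least one by hypothesis, it carries exactly one. For $(1)\Rightarrow(2)$ I would argue by contraposition: if $X$ were degenerate, then the main result of this section (the proof of the Conjecture) produces two non-equivalent induced additive actions on $X$, so (1) fails. This already gives $(1)\Leftrightarrow(2)$.

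For $(2)\Leftrightarrow(3)$ I would invoke Theorem 1.7, i.e.\ the one-to-one correspondence of \cite{AZ} between induced additive actions on non-degenerate hypersurfaces and $H$-pairs $(A,U)$ with $A$ Gorenstein and $U$ a hyperplane complementary to $\Soc A$. The direction $(2)\Rightarrow(3)$ is then immediate, since the $H$-pair attached to an action on a non-degenerate $X$ lies in the image of this correspondence. For $(3)\Rightarrow(2)$, if the $H$-pair $(A,U)$ of the given action has $A$ Gorenstein with $U$ complementary to $\Soc A$, then by the same correspondence the hypersurface it determines, which is exactly $X$, is non-degenerate; equivalently, a degenerate $X$ cannot have a Gorenstein $H$-pair of this type, or Theorem 1.7 would force it to be non-degenerate.

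There is no real obstacle here, only a point of precision: (3) is a statement about the specific $H$-pair of the given action, whereas (2) is intrinsic to $X$, so I would take care to derive both implications in $(2)\Leftrightarrow(3)$ directly from the bijectivity of the correspondence in Theorem 1.7, rather than from a single implication, so that no circular appeal to uniqueness of the action is made. With this in place, the three pieces assemble into the claimed equivalence.
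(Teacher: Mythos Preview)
Your proposal is correct and matches the paper's approach exactly: the paper does not give a separate proof of this theorem but simply states it with the sentence ``Combining our result with Theorems 1.7 and 1.8, we can now state the following theorem.'' You have merely spelled out the three implications that this combination entails, including the care about the bijectivity in Theorem~1.7 needed for $(2)\Leftrightarrow(3)$.
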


\begin{example}
    Consider the local algebra $A = \KK[x,y]/(x^4, x^2y, x^3 - y^2)$. Its maximal ideal~$\mm$ has a basis $x, y, x^2, xy, x^3$ (in particular, $\dim A = 6$). The subspace $U\subseteq \mm$ defined as $U = \langle x, y, x^2, xy\rangle$ generates the algebra $A$, so $(A, U)$ is an $H$-pair. It is easy to check that $\Soc A = \langle xy, x^3\rangle$. So, $A$ is not a Gorenstein algebra, and the corresponding projective hypersurface $X$ in $\PP^5$ is degenerate. Let us find the equation of this hypersurface. As before, denote by $\pi$ the canonical projection $\pi\colon \mm \to \mm/U$. In coordinate terms, $\pi(z)$ is simply the coefficient of $x^3$ in $z \in \mm$ for the unique expression of $z$ as a linear combination of the elements $x, y, x^2, xy, x^3$.
\begin{multline*} 
\pi(\ln(1 + t_1x + t_2y + t_3x^2 + t_4xy + t_5x^3)) = \\
= \pi(t_1x + t_2y + \ldots + t_5x^3 - \frac{(t_1x + t_2y + \ldots + t_5x^3)^2}{2} + \frac{(t_1x + t_2y + \ldots + t_5x^3)^3}{3}) = \\
= t_5 - t_1t_3 - \frac{t_2^2}{2} + \frac{t_1^3}{3},
\end{multline*}
    so the homogeneous equation of $X$ is
    $$z_0^2z_5 - z_0z_1z_3 - \frac{z_0z_2^2}{2} + \frac{z_1^3}{3} = 0.$$

Note that this equation does not depend on $z_4$, so the hypersurface $X$ is indeed degenerate.

Now, the subspace $\langle xy \rangle \subseteq U$ is an ideal of $A$ contained in $U$, so the reduction procedure can be applied. After the reduction, we obtain the algebra $$A_0 = \KK[x,y]/(x^4, xy, x^3 - y^2) = \KK[x,y]/(xy, x^3 - y^2)$$ with the maximal ideal $\mm_0 = \langle x, y, x^2, x^3\rangle$ and the generating subspace~$U_0 = \langle x, y, x^2 \rangle$.

Following the construction of the $H$-pair $(A_1, U_1)$ from the proof of Conjecture, we add a new variable $w$ to $A_0$. This gives us the algebra $$A_1 = \KK[x,y,w]/(xy, x^3 - y^2, xw, yw, w^2)$$ and its generating subspace $U_1 = \langle x, x^2, y, w\rangle$.

In order to find the equation of the corresponding hypersurface $X_1$, we do the following computations:
\begin{multline*} 
\pi_1(\ln(1 + t_1x + t_2y + t_3w + t_4x^2 + t_5x^3)) = \\
= \pi_1(t_1x + t_2y + \ldots + t_5x^3 - \frac{(t_1x + t_2y + \ldots + t_5x^3)^2}{2} + \frac{(t_1x + t_2y + \ldots + t_5x^3)^3}{3}) = \\
= t_5 - t_1t_4 - \frac{t_2^2}{2} + \frac{t_1^3}{3},
\end{multline*}

so the homogeneous equation of $X_1$ is
$$z_0^2z_5 - z_0z_1z_4 - \frac{z_0z_2^2}{2} + \frac{z_1^3}{3} = 0.$$

To construct $(A_2, U_2)$, we need to choose a system $f_1, f_2, \ldots, f_n$ of generators of the ideal $I = (xy, x^3 - y^2)$ and replace $f_n$ with $xf_n$, $yf_n$. The two polynomials $xy, x^3 - y^2$ generate the ideal $I$. If we apply our procedure to $xy$, we obtain the ideal $$\tilde{I} = (x^2y, xy^2, x^3 - y^2) = (x^2y, x^4, x^3 - y^2).$$ This gives us the algebra $$A_2 = \KK[x,y]/(x^2y, x^4, x^3 - y^2)$$ and its generating subspace $U_2 = \langle x, y, xy, x^2\rangle$. We see that it is the $H$-pair from which we started, so the corresponding hypersurface $X_2$ is the hypersurface $X$.
\bigskip

Let us write the formulas for the $\mathbb G_a^4$-actions on the hypersurfaces $X_1$ and $X_2$ explicitly. These actions are induced, and we are going to find the formulas for the corresponding $\mathbb G_a^4$-actions on~$\mathbb P^5$. In order to do this, we identify $\mathbb G_a^4$ with $\exp(U_i)$, $i = 1, 2$. Under this identification, the actions of $\mathbb G_a^4$ are given by the multiplication by elements of $\exp(U_i)$ for~$i = 1, 2$. 

For the $H$-pair $(A_1, U_1)$ an element $(t_1, t_2, t_3, t_4)\in \mathbb G_a^4$ is identified with $$\exp(t_1x + t_2y + t_3w + t_4x^2) \in \exp(U_1).$$ The computations we need to do are the following:
\begin{multline*} 
\exp(t_1x + t_2y + t_3w + t_4x^2)\cdot (z_0 + z_1x + z_2y + z_3w + z_4x^2 + z_5x^3) = \\
= (1 + t_1x + t_2y + t_3w + (t_4 + \frac{t_1^2}{2})x^2 + (\frac{t_2^2}{2} + \frac{t_1^3}{6})x^3)\cdot (z_0 + z_1x + z_2y + z_3w + z_4x^2 + z_5x^3) = \\
= z_0 + (z_1 + t_1z_0)x + (z_2 + t_2z_0)y + (z_3 + t_3z_0)w + (z_4 + t_1z_1 + (t_4 + \frac{t_1^2}{2})z_0)x^2 + \\
+ (z_5 + t_1z_4 + t_2z_2 + (t_4 + \frac{t_1^2}{2})z_1 + (\frac{t_2^2}{2} + \frac{t_1^3}{6})z_0)x^3,
\end{multline*}

so the action of $\mathbb G_a^4$ on $\PP^5$ is given by the formula
$$(t_1, t_2, t_3, t_4)\cdot [z_0: z_1: z_2: z_3: z_4: z_5] = $$
$$ = [z_0: z_1 + t_1z_0: z_2 + t_2z_0: z_3 + t_3z_0: z_4 + t_1z_1 + (t_4 + \frac{t_1^2}{2})z_0: z_5 + t_1z_4 + t_2z_2 + (t_4 + \frac{t_1^2}{2})z_1 + (\frac{t_2^2}{2} + \frac{t_1^3}{6})z_0].$$

The restriction of this action to the hypersurface $X_1\subseteq \PP^4$ has an open orbit isomorphic to the affine space $\mathbb A^4$. This orbit consists of all points $[z_0:z_1:z_2:z_3:z_4:z_5]\in X_1$ such that $z_0 \ne 0$. It is easy to see from the equation of $X_1$ that the complement to the open orbit in $X_1$ is the subspace
$$\{[z_0:z_1:z_2:z_3:z_4:z_5]\in \PP^5\colon z_0 = z_1 = 0\}.$$

On this set, the $\mathbb G_a^4$-action reduces to
$$(t_1, t_2, t_3, t_4)\cdot [0: 0: z_2: z_3: z_4: z_5] = [0: 0: z_2: z_3: z_4: z_5 + t_1z_4 + t_2z_2].$$

We obtain that the orbit of any point $[0: 0: z_2: z_3: z_4: z_5]$ with $z_2 \ne 0$ or $z_4\ne 0$ is isomorphic to $\mathbb A^1$, while the points $[0:0:0:z_3:0:z_4]$ are the fixed points of the $\mathbb G_a^4$-action. It follows that the set of fixed points of the $\mathbb G_a^4$-action is isomorphic to $\PP^1$.

\vspace{0.2cm}
Next, consider the $H$-pair $(A_2, U_2)$. For this $H$-pair, the element $(t_1, t_2, t_3, t_4)\in \mathbb G_a^4$ is identified with $$\exp(t_1x + t_2y + t_3x^2 + t_4xy) \in \exp(U_2).$$ The computations we need to do are the following:
\begin{multline*} 
\exp(t_1x + t_2y + t_3x^2 + t_4xy)\cdot (z_0 + z_1x + z_2y + z_3x^2 + z_4xy + z_5x^3) = \\
= (1 + t_1x + t_2y + (t_3 + \frac{t_1^2}{2})x^2 + (t_4 + t_1t_2)xy + (t_1t_3 + \frac{t_2^2}{2} + \frac{t_1^3}{6})x^3)\cdot (z_0 + z_1x + z_2y + z_3x^2 + z_4xy + z_5x^3) = \\
= z_0 + (z_1 + t_1z_0)x + (z_2 + t_2z_0)y + (z_3 + t_1z_1 + (t_3 + \frac{t_1^2}{2})z_0)x^2 + (z_4 + t_1z_2 + t_2z_1 + (t_4 + t_1t_2)z_0)xy + \\
+ (z_5 + t_1z_3 + t_2z_2 + (t_3 + \frac{t_1^2}{2})z_1 + (t_1t_3 + \frac{t_2^2}{2} + \frac{t_1^3}{6})z_0)x^3,
\end{multline*}

so the action of $\mathbb G_a^4$ on $\PP^5$ is given by the formula

$$(t_1, t_2, t_3, t_4)\cdot [z_0: z_1: z_2: z_3: z_4: z_5] = $$
$$ = [z_0: z_1 + t_1z_0: z_2 + t_2z_0: z_3 + t_1z_1 + (t_3 + \frac{t_1^2}{2})z_0: z_4 + t_1z_2 + t_2z_1 + (t_4 + t_1t_2)z_0:$$
$$ z_5 + t_1z_3 + t_2z_2 + (t_3 + \frac{t_1^2}{2})z_1 + (t_1t_3 + \frac{t_2^2}{2} + \frac{t_1^3}{6})z_0].$$

Again, the restriction of this action to the hypersurface $X_2\subseteq \PP^4$ has an open orbit isomorphic to the affine space $\mathbb A^4$ and consisting of all points $[z_0:z_1:z_2:z_3:z_4:z_5]\in X_2$ such that $z_0 \ne 0$. As for $(A_1, U_1)$, we find that the complement of this orbit in $X_2$ is
$$\{[z_0:z_1:z_2:z_3:z_4:z_5]\in \PP^5\colon z_0 = z_1 = 0\}.$$

On this set, the $\mathbb G_a^4$-action reduces to
$$(t_1, t_2, t_3, t_4)\cdot [0: 0: z_2: z_3: z_4: z_5] = [0: 0: z_2: z_3: z_4 + t_1z_2: z_5 + t_1z_3 + t_2z_2].$$

It is easy to see now that the the orbit of any point $[0:0:z_2:z_3:z_4:z_5]$ with $z_2 \ne 0$ is isomorphic to $\mathbb A^2$ and the orbit of any point $[0:0:0:z_3:z_4:z_5]$ with $z_3 \ne 0$ is isomorphic to $\mathbb A^1$. The fixed points are $[0:0:0:0:z_4:z_5]$ for any $z_4, z_5$ such $z_4 \ne 0$ or $z_5\ne 0$. The set of fixed poins of the $\mathbb G_a^4$-action is again isomorphic to $\PP^1$.

\bigskip
It is natural to ask if the construction of shrinking the ideal $I$ depends on the choice of generators of $I$. In our example, we can replace $x^3 - y^2$ with $x^4 - xy^2$, $x^3y - y^3$. We obtain the ideal $$\hat I = (xy, x^4 - xy^2, x^3y - y^3) = (xy, x^4, y^3).$$ This gives us the algebra $$\hat A_2 = \KK[x,y]/(xy, x^4, y^3)$$ and its generating subspace $\hat U_2 = \langle x, y, x^2, x^3 - y^2\rangle$.

Let us prove that the algebras $A_2$ and $\hat A_2$ are not isomorphic. Indeed, suppose that there exists an isomorphism $\varphi\colon A_2 \to \hat A_2$ defined by
\begin{align*}
& x \mapsto a_1x + a_2y + a_3x^2 + a_4x^3 + a_5y^2,\\
& y \mapsto b_1x + b_2y + b_3x^2 + b_4x^3 + b_5y^2.
\end{align*}

Since $x^3 = y^2$ in $A_2$, we must have
$$a_1^3x^3 = b_1^2x^2 + b_2^2y^2$$
in $\hat A_2$, so $a_1 = b_1 = b_2 = 0$. But in this case the element $\varphi(y)$ lies in $\hat \mm_2^2$, which gives us a contradiction.

We see that the $H$-pair obtained by our construction of shrinking the ideal of relations might depend on the order of generators of this ideal.

\end{example}

\section{Existence of non-degenerate hypersurfaces admitting an induced additive action.} 
In this section, we prove the following proposition.

\begin{proposition}
    For any positive integers $n$ and $d$ such that $2\leq d \leq n$ there exists a non-degenerate hypersurface of degree $d$ in $\PP^n$ that admits an induced additive action.
\end{proposition}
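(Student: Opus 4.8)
The plan is to reduce the statement to Theorem 1.7 and then, for each pair $(n,d)$ with $2 \le d \le n$, write down an explicit $H$-pair of the required form. Concretely, by Theorem 1.7 it suffices to construct a local finite-dimensional Gorenstein algebra $A$ of dimension $n+1$ with $\Soc A = \mm^d$, together with a hyperplane $U \subseteq \mm$ complementary to $\Soc A$; the corresponding $H$-pair then yields an induced additive action on a hypersurface $X$ of degree $d$ in $\PP^{n}$, and the only thing left to check is that $X$ is non-degenerate.

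For the algebra I would put $m = n-d+1$ (so $m \ge 1$, since $2 \le d \le n$) and set
$$A = \KK[x_1, x_2, \ldots, x_m] / \bigl( (x_i x_j : 1 \le i < j \le m) + (x_i^2 - x_1^d : 2 \le i \le m) + (x_1^{d+1}) \bigr),$$
which for $m = 1$ is just $\KK[x_1]/(x_1^{d+1})$. A direct verification shows that the classes of $1, x_1, x_1^2, \ldots, x_1^d, x_2, \ldots, x_m$ form a $\KK$-basis of $A$; in particular $\dim A = d + m = n+1$, the maximal ideal is $\mm = \langle x_1, x_1^2, \ldots, x_1^d, x_2, \ldots, x_m\rangle$, and $\mm^d = \langle x_1^d\rangle \ne 0$ while $\mm^{d+1} = 0$. (One may also recognise $A$ as the apolar algebra of the polynomial $x_1^d + x_2^2 + \cdots + x_m^2$, whence the dimension and the vanishing of $\mm^{d+1}$ follow from Macaulay's theory of inverse systems.)

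Next I would verify that $A$ is Gorenstein with $\Soc A = \mm^d$. Since $\mm^{d+1} = 0$, we have $\mm^d \subseteq \Soc A$, so it suffices to bound $\dim \Soc A$ from above. Writing an arbitrary $a \in \mm$ as $a = \sum_{k=1}^{d} c_k x_1^k + \sum_{i=2}^{m} e_i x_i$, one computes $a x_1 = \sum_{k=1}^{d-1} c_k x_1^{k+1}$ and $a x_i = e_i x_1^d$ for $2 \le i \le m$; hence $a \in \Soc A$ forces $c_1 = \cdots = c_{d-1} = 0$ and $e_2 = \cdots = e_m = 0$, i.e. $a \in \langle x_1^d\rangle$. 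Thus $\Soc A = \mm^d$ is one-dimensional, so $A$ is Gorenstein. I then take $U = \langle x_1, x_1^2, \ldots, x_1^{d-1}, x_2, \ldots, x_m\rangle$, a hyperplane in $\mm$ with $U \oplus \Soc A = \mm$. By Theorem 1.7 the $H$-pair $(A, U)$ corresponds to an induced additive action on a hypersurface $X$ of degree $d$ in $\PP^{n}$.

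Finally, to see that $X$ is non-degenerate I would appeal to Proposition 1.9 via the claim that the only ideal of $A$ contained in $U$ is $0$. Indeed, since $A$ is local finite-dimensional and Gorenstein, $\Soc A = \mm^d$ is its unique minimal nonzero ideal, so every nonzero ideal of $A$ contains $\mm^d$; but $\mm^d \not\subseteq U$ because $U \cap \mm^d = 0$. Hence the maximal ideal of $A$ contained in $U$ is trivial, and Proposition 1.9 shows that $X$ is non-degenerate. Running this construction for all admissible $(n,d)$ finishes the proof. I expect the genuinely delicate point to be concentrated in the first step — exhibiting a Gorenstein algebra with the prescribed dimension $n+1$ and socle concentrated in degree $d$; once the apolar algebra of $x_1^d + x_2^2 + \cdots + x_m^2$ is written down, the basis count, the socle computation, and the non-degeneracy argument are all routine.
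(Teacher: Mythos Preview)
Your proof is correct and follows the same overall strategy as the paper --- reduce to exhibiting, for each $(n,d)$, a Gorenstein local algebra of dimension $n+1$ with $\Soc A=\mm^d$ --- but your explicit construction is cleaner. The paper splits into two cases according to the parity of $n-d$: for $n-d=2k$ it uses generators $S_1,\dots,S_{2k+1}$ with $S_{2i-1}S_{2i}=S_{2k+1}^{\,d}$ and all other products zero, and for $n-d$ odd it adds one more ``square'' generator. Your single family, the apolar algebra of $x_1^{d}+x_2^{2}+\cdots+x_m^{2}$ with $m=n-d+1$, handles both parities at once and makes the basis and socle computations slightly shorter. Your explicit non-degeneracy argument via Proposition~1.9 (using that in a Gorenstein Artinian local ring the socle is the unique minimal nonzero ideal, so no nonzero ideal can lie in a complement $U$ of $\Soc A$) is a nice addition; the paper instead relies directly on Theorem~1.7, whose stated correspondence is implicitly with \emph{non-degenerate} hypersurfaces. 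Either route works; yours is a bit more self-contained.
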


\begin{proof}
    It suffices to give an example of a Gorenstein algebra $A$ with the maximal ideal $\mm$ such that $\dim A = n + 1$ and $\Soc A = \mm^d$ for all $n$ and $d$ satisfying the statement of the proposition. Our construction depends on the parity of the number $n - d$.

    Suppose that $n - d$ is even. Then we have $d = n - 2k$ for some non-negative integer $k$. If $k$ is positive, let $A_{n,k}$ be the commutative associative unital algebra given by generators $S_1, S_2, \ldots, S_{2k+1}$ subject to the following relations:

 \begin{center}$S_iS_j = 0$ for all $i,j$, $1\leq i\leq j\leq 2k+1$,
    \end{center}
    
    \begin{center}
    except $(i,j) = (1,2)$, $(3,4), \ldots, (2k-1, 2k)$, $(2k+1, 2k+1)$, and
    \end{center}
    
    \begin{center}$S_1S_2 = S_3S_4 = \ldots = S_{2k-1}S_{2k} = S_{2k+1}^{n-2k}.$
    \end{center}
    
    The collection of elements $1, S_1, \ldots, S_{2k}, S_{2k+1}, S_{2k+1}^2, \ldots S_{2k+1}^{n-2k}$ is clearly a basis of $A_{n,k}$, so $\dim A_{n,k} = n + 1$. The maximal ideal of $A_{n,k}$ is $m_{n,k}^A = \langle S_1, \ldots, S_{2k}, S_{2k+1}, S_{2k+1}^2, \ldots S_{2k+1}^{n-2k}\rangle$.

    Let us compute the ideal $\Soc A_{n,k}$. Suppose that
    $$a = x_1S_1 + x_2S_2 + \ldots x_{2k}S_{2k} + y_1S_{2k+1} + y_2S_{2k+1}^2 + \ldots + y_{n-2k}S_{2k+1}^{n-2k} \in \Soc A.$$
    
    Since $S_ia = 0$ for all $i=1,2, \ldots, 2k, 2k+1$, multiplying $a$ by $S_1, S_2, \ldots, S_{2k}$, we obtain that $$x_2 = x_1 = \ldots = x_{2k}=x_{2k-1} = 0.$$ Next, multiplying $a$ by $S_{2k+1}^{n-2k-1}, S_{2k+1}^{n-2k-2}, \ldots, S_{2k+1}$ successively, we obtain that $$y_1 = y_2 = \ldots = y_{n-2k-1} = 0.$$ 
    
    This means that $\Soc A = \langle S_{2k+1}^{n-2k}\rangle$. It remains to note that $(\mm_{n,k}^A)^{n-2k} = \langle S_{2k+1}^{n-2k}\rangle$. This holds since $n-2k\geq 2$ and any element of the form $S_{i_1}S_{i_2}\ldots S_{i_{n-2k}}$ is either zero (this happens when there are at least two elements among $S_{i_1}, S_{i_2}, \ldots, S_{i_{n-2k}}$ whose product equals zero; in particular, this happens if $n-2k > 2$ and not all of the elements $S_{i_1}, S_{i_2}, \ldots, S_{i_{n-2k}}$ are equal to $S_{2k+1}$), or $S_{2k+1}^{n-2k}$ (this happens if all of the elements $S_{i_1}, S_{i_2}, \ldots, S_{i_{n-2k}}$ are equal to $S_{2k+1}$ or if $n-2k = 2$ and $(i_1, i_2) = (2l-1, 2l)$ for some $l=1,2,\ldots, k$).

    We need to treat separately the case $k = 0$, i.e. $d = n$. We define $A_{n,0}$ as the algebra generated by one element $S$ subject to the relation $S^{n+1} = 0$. Then $A_{n,0}$ is isomorphic to $\KK[x]/(x^{n+1})$ and it is clear that $\dim A_{n,0} = n + 1$ and $\Soc A_{n,0} = \langle x^n\rangle$, so the algebra $A_{n,0}$ satisfies our conditions.

    Now, suppose that $n-d$ is odd, i.e. $d = n - 2k + 1$, where $k$ is a positive integer not exceeding $[\frac{n - 1}{2}]$. Let $B_{n,k}$ be the commutative associative unital algebra given by generators $S_1, S_2, \ldots, S_{2k}$ subject to the following relations:

    \begin{center}$S_iS_j = 0$ for all $i,j$, $0\leq i\leq j\leq 2k$,
    \end{center}
    
    \begin{center}
    except $(i,j) = (1,2)$, $(3,4), \ldots, (2k-3, 2k-2)$, $(2k-1, 2k-1)$, $(2k, 2k)$, and
    \end{center}
    
    \begin{center}$S_1S_2 = S_3S_4 = \ldots = S_{2k-3}S_{2k-2} = S_{2k-1}^2 = S_{2k}^{n - 2k + 1}.$
    \end{center}

    The collection of elements $1, S_1, \ldots, S_{2k-1}, S_{2k}, S_{2k}^2, \ldots S_{2k}^{n-2k + 1}$ is clearly a basis of $B_{n,k}$, so $\dim B_{n,k} = n + 1$. The maximal ideal of $B_{n,k}$ is $\mathfrak m_{n,k}^B = \langle S_1, \ldots, S_{2k-1}, S_{2k}, S_{2k}^2, \ldots S_{2k}^{n-2k+1}\rangle$.

    The proof of the fact that $\Soc B_{n,k} = \langle S_{2k}^{n-2k}\rangle$ and $\mathfrak (m_{n,k}^B)^{n-2k} = \langle S_{2k}^{n-2k}\rangle$ is identical to the one in the case of even $n - d$. This concludes the proof of Proposition 3.1.
        
\end{proof}

A natural question is whether the hypersurface of degree $d$ in $\PP^n$ admitting an induced additive action is unique. In general, the answer is negative as we show in the examples below. However, this is true for $d = 2$ and $d = n$. Indeed, a non-degenerate hypersurface of degree~$2$ is a smooth quadric, which is unique up to isomorphism. For $d = n$, such a hypersurface corresponds to an $H$-pair $(A, U)$, where $A$ is a local finite-dimensional Gorenstein algebra of dimension $n + 1$ with the maximal ideal $\mm$ such that $\mm^n \ne 0$ and $\mm^{n+1} = 0$. Consider the sequence of subspaces of $A$:
$$A\supseteq \mm \supseteq \mm^2 \supseteq \ldots \supseteq \mm^{n-1} \supseteq \mm^n \supseteq 0.$$

The dimension of the subspaces in this sequence strictly decreases. Since $\dim A = n + 1$, it must decrease exactly by $1$ at each step. In particular, $\dim \mm/\mm^2 = 1$. Now, Nakayama's Lemma implies that every element $x\in \mm\setminus \mm^2$ generates the algebra $A$. Therefore, the algebra~$A$ is isomorphic to $\KK[x]/I$ for some nilpotent ideal $I\subseteq \KK[x]$. But any nilpotent non-zero ideal in $\KK[x]$ is of the form $(x^k)$ for some positive integer $k$. Since $\dim A = n$, we conclude that $A \cong \KK[x]/(x^{n+1})$. It remaines to prove that the $H$-pair $(A, U)$ does not depend on the choice of a subspace $U\subseteq \mm$ complementary to $\Soc A = \langle x^n\rangle$. In the end of this section, we prove that the same property holds for the algebra $A_5 = \KK[x,y,z]/(x^2, y^2, xz, yz, xy - z^3)$. For the algebra $A = \KK[x]/(x^{n+1})$, the proof is similar. The proof is not difficult but technical and rather tedious, so we are not going to give it here.

    As an example, we compute explicitly the equations of projective hypersurfaces corresponding to all Gorenstein local algebras of dimension $6$. In \cite[Chapter 1.1]{AZ}, a complete list of local finite-dimensional algebras up to dimension $6$ is given. There are $25$ non-isomorphic local algebras of dimension $6$, and there are six Gorenstein local algebras among them. These are the following:

\begin{center}
\begin{tabular}{ |c|c|c| }
 \hline
 № & Algebra & $d$\\
 \hline\hline
 1 & $A_1 = \KK[x]/(x^6)$ & $5$\\
 \hline
 2 & $A_2 = \KK[x,y]/(xy, x^4 - y^2)$ & 4\\
 \hline
 3 & $A_3 = \KK[x,y]/(xy, x^3 - y^3)$ & 3 \\
 \hline
  4 & $A_4 = \KK[x,y]/(x^3, y^2)$ & 3\\ 
 \hline
  5 & $A_5 = \KK[x,y,z]/(x^2, y^2, xz, yz, xy - z^3)$ & 3\\
 \hline
  6 & $A_6 = \KK[x_1, x_2, x_3, x_4]/(x_i^2 - x_j^2, x_ix_j, i\ne j)$ & 2\\ 
 \hline
\end{tabular}
\end{center}

Note that the algebras $A_1$, $A_2$, $A_5$ and $A_6$ are the algebras that we constructed in the proof of Proposition 3.1.

Let us compute explicitly the equations of the non-degenerate hypersurfaces in~$\PP^5$ corresponding to these algebras (for some choice of generating subspaces $U_i$). As before, $\pi_i$ stands for the canonical projection $\mm_i\to\mm_i/U_i$.

\begin{enumerate}[wide, labelwidth=!, labelindent=0pt]
    \item We start with $A_1 = \KK[x]/(x^6)$. The maximal ideal of $A_1$ is $\mm_1 = \langle x, x^2, x^3, x^4, x^5\rangle$ and the socle is $\Soc A_1 = \langle x^5\rangle$. Choose the generating subspace $U_1 = \langle x, x^2, x^3, x^4\rangle$. The equation of the corresponding hypersurface $X_1$ is computed as follows:
    
\begin{multline*}
    \pi_1(\ln(1 + t_1x + t_2x^2 + t_3x^3 + t_4x^4 + t_5x^5)) = \pi_1(t_1x + t_2x^2 + \ldots + t_5x^5 - \\
    - \frac{(t_1x + t_2x^2 + \ldots + t_5x^5)^2}{2} + \frac{(t_1x + t_2x^2 + \ldots + t_5x^5)^3}{3} - \frac{(t_1x + t_2x^2 + \ldots + t_5x^5)^4}{4} +\\
    + \frac{(t_1x + t_2x^2 + \ldots + t_5x^5)^5}{5}) = t_5 - t_1t_4 - t_2t_3 + t_1^2t_3 + t_1t_2^2 - t_1^3t_2 + \frac{t_1^5}{5},
\end{multline*}

so the homogeneous equation of $X_1$ is
$$f_1(z) = z_0^4z_5 - z_0^3z_1z_4 - z_0^2z_2z_3 + z_0^2z_1^2z_3 + z_0^2z_1z_2^2 - z_0z_1^3z_2 + \frac{z_1^5}{5} = 0.$$

Let us find the singular locus of $X_1$. The open orbit of the $\mathbb G_a^4$-action on $X_1$ is $$\{[z_0: z_1: z_2: z_3: z_4: z_5]\colon z_0 \ne 0\},$$ so for any singular point $[z_0: z_1: z_2: z_3: z_4: z_5]$ of $X_1$ we have $z_0 = 0$. It is easy to see that the equalities $\frac{\partial f_1}{\partial z_i} = 0$, $i = 1,2,3,4,5$ hold if and only if $z_1 = 0$. We obtain that the singular locus of $X_1$ is the following subspace:
$$\{[z_0: z_1: z_2: z_3: z_4: z_5]\colon z_0 = z_1 = 0\}.$$

It is known that a projective hypersurface is normal if and only if its singular locus has codimension at least $2$ (see \cite[Section 5.1, Chapter 2]{IShaf}). Since the singular locus of $X_1$ has codimension~$1$ in $X_1$, the hypersurface $X_1$ is not normal.
\bigskip

\item $A_2 = \KK[x,y]/(xy, x^4 - y^2)$. The maximal ideal of $A_2$ is $\mm_2 = \langle x, y, x^2, x^3, x^4 \rangle$ and the socle is $\Soc A_2 = \langle x^4\rangle$. Choose the generating subspace $U_2 = \langle x, y, x^2, x^3\rangle$. Now, we have

\begin{multline*}
    \pi_2(\ln(1 + t_1x + t_2y + t_3x^2 + t_4x^3 + t_5x^4)) = \pi_2(t_1x + t_2y + \ldots + t_5x^4 - \\
    - \frac{(t_1x + t_2y + \ldots + t_5x^4)^2}{2} + \frac{(t_1x + t_2y + \ldots + t_5x^4)^3}{3} - \frac{(t_1x + t_2y + \ldots + t_5x^4)^4}{4})= \\
    = t_5 - t_1t_4 - \frac{t_2^2}{2} - \frac{t_3^2}{2} + t_1^2t_3 - \frac{t_1^4}{4},
\end{multline*}
so the homogeneous equation of the corresponding hypersurface $X_2$ is

$$z_0^3z_5 - z_0^2z_1z_4 - \frac{z_0^2z_2^2}{2} - \frac{z_0^2z_3^2}{2} + z_0z_1^2z_3 - \frac{z_1^4}{4} = 0.$$

It is easy to see that the singular locus of $X_2$ is
$$\{[z_0: z_1: z_2: z_3: z_4: z_5]\colon z_0 = z_1 = 0\}.$$
Since it has codimension $1$ in $X_2$, the hypersurface $X_2$ is not normal.

\bigskip

\item $A_3 = \KK[x,y]/(xy, x^3 - y^3)$. The maximal ideal of $A_3$ is $\mm_3 = \langle x, y, x^2, y^2, x^3\rangle$ and the socle is $\Soc A_3 = \langle x^3\rangle$. Choose the generating subspace $U_3 = \langle x, y, x^2, y^2\rangle$. We have

\begin{multline*}
    \pi_3(\ln(1 + t_1x + t_2y + t_3x^2 + t_4y^2 + t_5x^3)) = \pi_3(t_1x + t_2y + t_3x^2 + t_4y^2 + t_5x^3 -\\
    - \frac{(t_1x + t_2y + t_3x^2 + t_4y^2 + t_5x^3)^2}{2} + \frac{(t_1x + t_2y + t_3x^2 + t_4y^2 + t_5x^3)^3}{3}) = \\
    = t_5 - t_1t_3 - t_2t_4 + \frac{t_1^3}{3} + \frac{t_2^3}{3},
\end{multline*}
so the homogeneous equation of the corresponding projective hypersurface $X_3$ is

$$z_0^2z_5 - z_0z_1z_3 - z_0z_2z_4 + \frac{z_1^3}{3} + \frac{z_2^3}{3} = 0.$$

The singular locus of $X_3$ is
$$\{[z_0: z_1: z_2: z_3: z_4: z_5]\colon z_0 = z_1 = z_2 = 0\}.$$

It has codimension $2$ in $X_3$, so the hypersurface $X_3$ is normal.

\bigskip

\item $A_4 = \KK[x,y]/(x^3, y^2)$. The maximal ideal of $A_4$ is $\mm_4 = \langle x, y, x^2, xy, x^2y \rangle $ and the socle is $\Soc A_4 = \langle x^2y\rangle$. Choose the generating subspace $U_4 = \langle x, y, x^2, xy\rangle$. Then

\begin{multline*}
    \pi_4(\ln(1 + t_1x + t_2y + t_3x^2 + t_4xy + t_5x^2y)) = \pi_4(t_1x + t_2y + t_3x^2 + t_4xy + t_5x^2y -\\
    - \frac{(t_1x + t_2y + t_3x^2 + t_4xy + t_5x^2y)^2}{2} + \frac{(t_1x + t_2y + t_3x^2 + t_4xy + t_5x^2y)^3}{3}) = \\
    = t_5 - t_1t_4 - t_2t_3 + t_1^2t_2, 
\end{multline*}
    so the homogeneous equation of the corresponding projective hypersurface $X_4$ is
    
$$z_0^2z_5 - z_0z_1z_4 - z_0z_2z_3 + z_1^2z_2 = 0.$$

The singular locus of $X_4$ is
$$\{[z_0: z_1: z_2: z_3: z_4: z_5]\colon z_0 = z_1 = 0\}.$$

It has codimension~$1$ in $X_4$, so the hypersurface $X_4$ is not normal.

\bigskip

\item $A_5 = \KK[x,y,z]/(x^2, y^2, xz, yz, xy - z^3)$. The maximal ideal of $A_5$ is $\mm_5 = \langle x, y, z, z^2, z^3\rangle$ and the socle is $\Soc A_5 = \langle z^3\rangle$. Choose the generating subspace $U_5 = \langle x, y, z, z^2 \rangle$. Then we have

\begin{multline*}
    \pi_5(\ln(1 + t_1x + t_2y + t_3z + t_4z^2 + t_5z^3)) = \pi_5(t_1x + t_2y + t_3z + t_4z^2 + t_5z^3 -\\
    - \frac{(t_1x + t_2y + t_3z + t_4z^2 + t_5z^3)^2}{2} + \frac{(t_1x + t_2y + t_3z + t_4z^2 + t_5z^3)^3}{3} = t_5 - t_1t_2 - t_3t_4 + \frac{t_3^3}{3},
\end{multline*}
so the homogeneous equation of the corresponding projective hypersurface $X_5$ is

$$z_0^2z_5 - z_0z_1z_2 - z_0z_3z_4 + \frac{z_3^3}{3} = 0.$$

The singular locus of $X_5$ is
$$\{[z_0: z_1: z_2: z_3: z_4: z_5]\colon z_0 = z_3 = 0\}.$$
It has codimension $1$ in $X_5$, so the hypersurface $X_5$ is not normal.

\bigskip

\item $A_6 = \KK[x_1, x_2, x_3, x_4]/(x_i^2 - x_j^2, x_ix_j, i\ne j)$.  The maximal ideal of $A_6$ is $\mm_6 = \langle x_1, x_2, x_3, x_4, x_1^2 \rangle$ and the socle is $\Soc A_6 = \langle x_1^2 \rangle$. Choose the generating subspace $U_6 = \langle x_1, x_2, x_3, x_4 \rangle$. Then

\begin{multline*}
\pi_6(\ln(1 + t_1x_1 + t_2x_2 + t_3x_3 + t_4x_4 + t_5x_1^2)) = \pi_6(t_1x_1 + t_2x_2 + t_3x_3 + t_4x_4 + t_5x_1^2 - \\
- \frac{(t_1x_1 + t_2x_2 + t_3x_3 + t_4x_4 + t_5x_1^2)^2}{2}) = t_5 - \frac{t_1^2}{2} - \frac{t_2^2}{2} - \frac{t_3^2}{2} - \frac{t_4^2}{2},
\end{multline*}
so the homogeneous equation of the corresponding projective hypersurface $X_6$ is

$$z_0z_5 - \frac{z_1^2}{2} - \frac{z_2^2}{2} - \frac{z_3^2}{2} - \frac{z_4^2}{2} = 0.$$

The hypersurface $X_6$ is a smooth quadric, so it is automatically normal.

\end{enumerate}

\bigskip

To conclude, we obtained the following six non-degenerate pairwise non-isomorphic projective hypersurfaces in $\PP^5$ admitting an induced additive action:

\bigskip

\renewcommand{\arraystretch}{1.5}
\hspace{-2.2cm}
\begin{tabular}{ |c|c|c|c| }
 \hline
 Equation & Degree & Singular locus & Is it normal?\\
 \hline\hline
 $z_0^4z_5 - z_0^3z_1z_4 - z_0^2z_2z_3 + z_0^2z_1^2z_3 + z_0^2z_1z_2^2 - z_0z_1^3z_2 + \frac{1}{5}z_1^5 = 0$ & 5 & $\{z_0 = z_1 = 0\}$ & No\\
 \hline
 $z_0^3z_5 - z_0^2z_1z_4 - \frac{1}{2}z_0^2z_2^2 - \frac{1}{2}z_0^2z_3^2 + z_0z_1^2z_3 - \frac{1}{4}z_1^4=0$ & 4 & $\{z_0 = z_1 = 0\}$ & No\\ 
 \hline
  $z_0^2z_5 - z_0z_1z_3 - z_0z_2z_4 + \frac{1}{3}z_1^3 + \frac{1}{3}z_2^3 = 0$ & 3 & $\{z_0 = z_1 = z_2 = 0\}$ & Yes \\ 
 \hline
  $z_0^2z_5 - z_0z_1z_4 - z_0z_2z_3 + z_1^2z_2 = 0$ & 3 & $\{z_0 = z_1 = 0\}$ & No\\ 
 \hline
  $z_0^2z_5 - z_0z_1z_2 - z_0z_3z_4 + \frac{1}{3}z_3^3 = 0$ & 3 & $\{z_0 = z_3 = 0\}$ & No\\
 \hline
  $z_0z_5 - \frac{1}{2}z_1^2 - \frac{1}{2}z_2^2 - \frac{1}{2}z_3^2 - \frac{1}{2}z_4^2 = 0$ & 2 & $\O$ & Yes\\ 
 \hline
\end{tabular}

\renewcommand{\arraystretch}{1}

\bigskip

We see that for $d = 2, 4, 5$ there is a unique, up to isomorphism, non-degenerate hypersurface of degree $d$ in $\PP^5$ admitting an induced additive action, while for $d = 3$ there are three isomorphism classes of such hypersurfaces.

Strictly speaking, we cannot say yet that these are all non-degenerate hypersurfaces in~$\PP^5$ admitting an induced additive action. The problem is that $H$-pairs of the form $(A, U)$ might be non-equivalent for different choices of a subspace $U$ complementary to $\Soc A$. However, it can be shown by explicit computations that this does not happen for the algebras $A_1$, $A_2$, $A_3$, $A_4$, $A_5$, $A_6$. Let us do this, for instance, for the algebra $A_5$. It suffices to show that the $H$-pair $(A_5, U_5)$ is equivalent to any $H$-pair $(A_5, U_5')$ for any hyperplane $U_5'\subseteq \mm_5$ complementary to~$\langle z^3 \rangle$. Any such subspace $U_5'$  consists of the elements of the form
$$t_1x + t_2y + t_3z + t_4z^2 + t_5z^3$$
satisfying the equation

$$p_1t_1 + p_2t_2 + p_3t_3 + p_4t_4 + t_5 = 0$$
for some $p_1, p_2, p_3, p_4\in \KK$. We need to show that there exists an algebra automorphism $\varphi\colon A_5 \to A_5$ sending $U_5$ to~$U_5'$. Let us try to find such an isomorphism in the form
\begin{align*}
    & x \mapsto x + a_1z^2 + a_2z^3,\\
    & y \mapsto y + b_1z^2 + b_2z^3,\\
    & z \mapsto z + c_1z^2 + c_2z^3
\end{align*}
for some $a_1, a_2, b_1, b_2, c_1, c_2 \in \KK$.

It is clear that the ideal of relations $I_5 = (x^2, y^2, xz, yz, xy - z^3)$ is mapped by any such $\varphi$ to itself, so $\varphi$ is a well-defined homomorphism from $A_5$ to itself. We only need to ensure that the subspace $U_5$ is mapped to $U_5'$ by $\varphi$. Recall that $U_5 = \langle x, y, z, z^2\rangle$. Since $\varphi(z^2) = z^2 + 2c_1z^3$, we must have
$$p_4 + 2c_1 = 0, \hspace{0.2cm} c_1 = -\frac{p_4}{2}.$$

Now, the condition $\varphi(x), \varphi(y), \varphi(z) \in U_5'$ is satisfied if
$$p_1 + a_1p_4 + a_2 = p_2 + b_1p_4 + b_2 = p_3 + c_1p_4 + c_2 = 0.$$

We can choose $a_1, b_1$ arbitrarily and then $a_2, b_2, c_2$ are defined uniquely.

To finish the proof, we only need to show that $\varphi$ is bijective. But the elements $$x + a_1z^2 + a_2z^3, y + b_1z^2 + b_2z^3, z + c_1z^2 + c_2z^3$$ form a basis of $\mm_5/\mm_5^2$ as a vector space over $\KK$. Thus, these three elements generate $A_5$ by Nakayama's Lemma. It follows that $\varphi$ is surjective, so it is bijective for dimension reasons.

Using the same technique, one can prove that the hypersurfaces corresponding to $(A_i, U)$ do not depend on the choice of $U$ for any $i = 1, 2, \ldots, 6$ (again, provided that $U\subseteq \mm_i$ is complementary to $\Soc A_i$). Therefore, there are exactly six non-degenerate hypersurfaces in $\PP^5$ admitting an additive action up to equivalence.


\begin{thebibliography}{99}

\bibitem{AP} Ivan Arzhantsev and Andrey Popovskiy. Additive actions on projective hypersurfaces. In: Automorphisms in Birational and Affine Geometry, Springer Proc. Math. Stat., 79, Springer, 2014, 17-33

\bibitem{ASh} Ivan Arzhantsev and Elena Sharoyko. Hassett–Tschinkel correspondence: Modality and projective hypersurfaces. J. Algebra 348 (2011), no. 1, 217-232

\bibitem{AZ} Ivan Arzhantsev and Yulia Zaitseva. Equivariant completions of affine spaces. Russian Math. Surveys. 77 (2022), no. 4, 571–650

\bibitem{BGT} Viktoriia Borovik, Sergey Gaifullin, and Anton Trushin. Commutative actions on smooth projective quadrics. Comm. Algebra. 50 (2022), no. 12, 5468-5476

\bibitem{HT} Brendan Hassett and Yuri Tschinkel. Geometry of equivariant compactifications of $\mathbb G_a^n$. Int. Math. Res. Not. IMRN 1999 (1999), no. 22, 1211-1230

\bibitem{YL} Yingqi Liu. Additive actions on hyperquadrics of corank two. Electron. Res. Arch. 30 (2022), no. 1, 1-34

\bibitem{Shaf} Anton Shafarevich. Additive actions on toric projective hypersurfaces. Results Math. 76 (2021), no. 3, art. 145

\bibitem{IShaf} Igor Shafarevich. Basic algebraic geometry /I. R. Shafarevich; translated from the Russian by K. A. Hirsch. Springer-Verlag Berlin; New York, 1974

\bibitem{Sh} Elena Sharoyko. Hassett-Tschinkel correspondence and automorphisms of the quadric. Sb. Math. 200 (2009), no. 11, 1715–1729


\end{thebibliography}
\end{document}